\newtheorem{defi}{Definition}[section]
\newtheorem{theorem}[defi]{Theorem}
\newtheorem{lemma}[defi]{Lemma}
\newtheorem{prop}[defi]{Proposition}
\newtheorem{corollary}[defi]{Corollary}
\newtheorem{example}[defi]{Example}
\newtheorem{remark}[defi]{Remark}
\newcommand{\C}{\mathbb{C}}
\newcommand{\VVec}{\textrm{Vec}}
\begin{document}

\thanks{The first author was supported by Austrian Science Fund project FWF P24028-N18.}
\keywords{Wreath product of matrices, Circulant matrix, Block-circulant matrix, Wreath product of graphs, Lamplighter random walk, Sylvester matrix equation.}

\title{Wreath product of matrices}

\author{Daniele D'Angeli}
\address{Daniele D'Angeli, Institut f\"{u}r mathematische Strukturtheorie (Math C)\\
Technische Universit\"{a}t Graz \ \ Steyrergasse 30, 8010 Graz, Austria}
\email{dangeli@math.tugraz.at}
\author{Alfredo Donno}
\address{Alfredo Donno, Universit\`{a} degli Studi Niccol\`{o} Cusano - Via Don Carlo Gnocchi, 3 00166 Roma, Italia \\
Tel.: +39 06 45678356, Fax: +39 06 45678379}
\email{alfredo.donno@unicusano.it}

\maketitle

\begin{abstract}
We introduce a new matrix product, that we call the wreath
product of matrices. The name is inspired by the analogous product for
graphs, and the following important correspondence is proven: the wreath product of the adjacency matrices
of two graphs provides the adjacency matrix of the wreath product of
the graphs. This correspondence is exploited in order to study the spectral
properties of the famous Lamplighter random walk: the spectrum is
explicitly determined for the \lq\lq Walk or switch\rq\rq model on a complete graph of any size,
with two lamp colors. The investigation of the spectrum of the matrix wreath product is actually developed for the more general case where the second factor is a circulant matrix. Finally, an application to the study of generalized Sylvester matrix equations is treated.
\end{abstract}

\begin{center}
{\footnotesize{\bf Mathematics Subject Classification (2010)}: 15A69, 15A18, 05C50, 05C76, 05C81.}
\end{center}

\section{Introduction}

A classical tool to deal with combinatorial, probabilistic and analytical problems related to finite graphs is given by the corresponding adjacency matrix, whose rows and columns are indexed by the vertices of the graph, and where the adjacency of two vertices in the graph corresponds to a nonzero entry at the intersection of the corresponding row and column. A natural question arising in this setting asks what would be the appropriate product between matrices associated with a given product of graphs. An easy example is given by the direct product of graphs, whose adjacency matrix can be represented by the Kronecker product of the corresponding adjacency matrices. Similarly, the Cartesian product and the lexicographic product of graphs correspond to the so-called crossed and nested product of matrices, respectively (see \cite{aap}). The reader can refer, for instance, to the papers \cite{imrich, sabidussi, sabidussi2} for definitions and properties of these graph products (see also the beautiful handbook \cite{imrichbook}).\\ In this paper, we define an opportune and general operation between two square matrices $A$ and $B$: the {\it wreath product} $A\wr B$. We show some interesting algebraic properties of such product, focusing our attention on the case in which $B$ is a circulant matrix. In this case, we are able to provide a reduction formula for the spectrum of the matrix $A\wr B$. More precisely, even if the matrix $A\wr B$ has order $nm^n$ when $A$ has order $n$ and $B$ has order $m$, we prove that its spectrum can be explicitly determined by computing the spectrum of much smaller matrices of order $n$.\\
It turns out that the wreath product of matrices is the matrix-analogue of the classical wreath product of graphs (see, for instance, \cite{erschler}): the adjacency matrix of the wreath product of the graphs $\mathcal{G}$ and $\mathcal{G}'$ is given by $A\wr B$, if $A$ and $B$ are the adjacency matrices of the graphs $\mathcal{G}$ and $\mathcal{G}'$, respectively. Sometimes, operations on matrices can have an interesting probabilistic interpretation, since they can be used to model Markov chains. We want to mention here the papers \cite{aap, generalized}, where two families of Markov chains are introduced by using suitable Kronecker products, and the papers \cite{orthogonal, aam}, where these models and their spectral properties are investigated in connection with random walks on trees or on more general combinatorial structures.\\ In our case, the connection with the probability is achieved by the notion of Lamplighter random walk. This is a well known model in the literature, and several papers have been devoted to its analysis, mainly in the case where the underlying graph is the discrete line. Spectral computations for the Lamplighter random walk and related graphs have been developed in the infinite setting \cite{bar, woe}, as well as in the finite setting \cite{sca1, sca2}. In this paper, we prove that the spectral analysis of the matrix $A\wr B$ in the case in which $A$ is the adjacency matrix of a regular graph, provides the spectrum of the Lamplighter random walk on such a graph (see also \cite{ijgt}, where the Lamplighter random walk is studied in connection with the zig-zag product of graphs): an explicit computation is performed for the complete graph. Finally, we apply our results to a uniqueness result for a special type of the so-called Sylvester matrix equations. This kind of equations plays a central role in many areas of applied mathematics, and in particular in systems and control theory. We show that the wreath product allows to represent the coefficient matrix of a system of equations, whose solution coincides exactly with the solution of the generalized Sylvester matrix equation.

The paper is organized as follows. In Section \ref{section2}, we introduce the key definition of the wreath product of two matrices, and we list its basic algebraic properties. In particular, the Lemma \ref{lemmadani0} is devoted to the description of the block structure of the product matrix. In Section \ref{section3}, the correspondence with the wreath product of graphs is proven (Theorem \ref{thmcorrespondence}), and in Corollary \ref{corollarylamp} the transition matrix of the Lamplighter random walk is described, in terms of the wreath product of matrices. In Section \ref{section4}, a linear map $F$ is defined, in order to study the centralizer of a given matrix with respect to the wreath product (Corollary \ref{corollarycommutativity}). The Section \ref{sectionspectrum} is entirely devoted to the investigation of the spectral properties of $A\wr B$, with a circulant $B$, via a reduction argument allowing to describe the spectrum of the matrix product by computing the spectrum of a family of matrices having the same order as $A$ (Theorem \ref{theoremspectrummain}); the spectrum is explicitly described in the case of $A$ of order $2$ (Corollary \ref{caseA2}), and in the case of a diagonal $A$ and a uniform $B$ (Theorem \ref{theoremdeterminant}). An application of these computations provides the explicit spectrum of the Lamplighter random walk on the complete graph (Theorem \ref{spectrumlamp}). Finally, in Section \ref{sylvestersection}, we show that the so-called generalized Sylvester matrix equation can be rewritten by means of the wreath product of matrices (Proposition \ref{prop_sistema}), and we give in Corollary \ref{uniqueness} a sufficient condition for the uniqueness of the solution to this problem.

\section{Wreath product of matrices: definition and basic properties}\label{section2}

We introduce in this section the main definition of the paper, that is, the wreath product of two square matrices of any order. We also list a series of properties of this operation, that are a straightforward consequence of the given definition. In Subsection \ref{subsection21}, we study in the detail the block structure of the matrix $A\wr B$, and we deduce some interesting properties of its determinant.

We denote by $\mathcal{M}_{m\times n}(\mathbb{C})$ the complex vector space of matrices with $m$ rows and $n$ columns over the complex field. We write $\mathcal{M}_n(\mathbb{C})$ instead of $\mathcal{M}_{n\times n}(\mathbb{C})$, and we denote by $I_n$ the identity matrix of order $n$. We recall that the \textit{Kronecker product} of two matrices $A=(a_{ij})_{i=1,\ldots, m; j=1,\ldots, n}\in \mathcal{M}_{m\times n}(\mathbb{C})$ and $B=(b_{hk})_{h=1,\ldots, p; k=1,\ldots, q}\in \mathcal{M}_{p\times q}(\mathbb{C})$ is defined to be the $mp\times nq$ matrix
$$
A\otimes B = \left(
                \begin{array}{ccc}
                  a_{11}B & \cdots & a_{1n}B \\
                  \vdots & \ddots & \vdots \\
                  a_{m1}B & \cdots & a_{mn}B \\
                \end{array}
              \right).
$$
We denote by $A^{\otimes^n}$ the iterated Kronecker product $\underbrace{A\otimes \cdots \otimes A}_{n \textrm{ times}}$. We put $A^{\otimes^0}=1$.
\begin{defi}\label{maindefinition}
Let $A\in \mathcal{M}_n(\mathbb{C})$ and $B\in \mathcal{M}_m(\mathbb{C})$, where $m$ and $n$ are two positive integers. For each $i=1,\ldots, n$, let $C_i = (c_{hk})_{h,k=1,\ldots,n}\in \mathcal{M}_n(\mathbb{C})$ be the matrix defined by
$$
c_{hk}= \left\{
          \begin{array}{ll}
            1 & \hbox{if } h=k=i \\
            0 & \hbox{otherwise.}
          \end{array}
        \right.
$$
The wreath product of $A$ and $B$ is the square matrix of order $n m^n$ defined as
$$
A\wr B = I_m^{\otimes^n} \otimes A + \sum_{i=1}^n I_m^{\otimes^{i-1}}\otimes B \otimes I_m^{\otimes^{n-i}}\otimes C_i.
$$
\end{defi}
We will show (see Corollary \ref{corollarycommutativity}) that the wreath product defined above is in general not commutative, even if the matrices $A$ and $B$ have the same order. On the other hand, the following proposition holds.
\begin{prop}
The wreath product $A\wr B$ satisfies the following quasi-associative laws, with respect to the product by a scalar, and quasi-distributive laws:
\begin{itemize}
\item $(hA)\wr B = h\left(A\wr \left(\frac{1}{h}B\right)\right)$\ \ and\ \ $A\wr (hB) = h\left(\left(\frac{1}{h}A\right)\wr B\right), \quad \forall h\in \mathbb{C}, h\neq 0$.
\item  $A\wr(B+C) = (\lambda A)\wr B + (\mu A)\wr C$\ \ and\ \ $(A+B)\wr C = A\wr (\lambda  C) + B\wr (\mu C)$,\\
where $\lambda,\mu$ are any two complex numbers such that $\lambda+\mu =1$.
\end{itemize}\end{prop}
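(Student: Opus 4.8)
The plan is to prove all four identities by a direct expansion of both sides via Definition \ref{maindefinition}, using only the elementary properties of the Kronecker product: its bilinearity in each argument and, in particular, its homogeneity $(hX)\otimes Y = h(X\otimes Y) = X\otimes(hY)$ for a scalar $h$, together with its additivity $X\otimes(Y+Y') = X\otimes Y + X\otimes Y'$ (and similarly in the left factor). No property of $\wr$ beyond its defining formula is needed.

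For the two scalar laws, I would expand $(hA)\wr B = I_m^{\otimes^n}\otimes(hA) + \sum_{i=1}^n I_m^{\otimes^{i-1}}\otimes B\otimes I_m^{\otimes^{n-i}}\otimes C_i$. By homogeneity the first term equals $h\,(I_m^{\otimes^n}\otimes A)$, and each summand equals $h\bigl(I_m^{\otimes^{i-1}}\otimes(\tfrac{1}{h}B)\otimes I_m^{\otimes^{n-i}}\otimes C_i\bigr)$, since $h\cdot\tfrac1h B = B$ and the scalar may be pushed onto the middle Kronecker factor. Factoring $h$ out of the whole expression yields exactly $h\bigl(A\wr(\tfrac{1}{h}B)\bigr)$. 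The identity $A\wr(hB) = h\bigl((\tfrac1h A)\wr B\bigr)$ follows in the same manner, this time moving the scalar onto the first factor of $I_m^{\otimes^n}\otimes A$.

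For the distributive law $A\wr(B+C) = (\lambda A)\wr B + (\mu A)\wr C$, with $B,C\in\mathcal{M}_m(\mathbb{C})$ and $\lambda+\mu=1$, additivity of $\otimes$ in the middle factor gives $A\wr(B+C) = I_m^{\otimes^n}\otimes A + \sum_{i=1}^n I_m^{\otimes^{i-1}}\otimes B\otimes I_m^{\otimes^{n-i}}\otimes C_i + \sum_{i=1}^n I_m^{\otimes^{i-1}}\otimes C\otimes I_m^{\otimes^{n-i}}\otimes C_i$. Expanding the right-hand side produces exactly these same two sums, together with $I_m^{\otimes^n}\otimes(\lambda A) + I_m^{\otimes^n}\otimes(\mu A) = I_m^{\otimes^n}\otimes\bigl((\lambda+\mu)A\bigr) = I_m^{\otimes^n}\otimes A$, where the last equality uses $\lambda+\mu=1$; the two sides thus coincide. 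The remaining identity $(A+B)\wr C = A\wr(\lambda C) + B\wr(\mu C)$ (with $A,B\in\mathcal{M}_n(\mathbb{C})$) is proven symmetrically: one splits $I_m^{\otimes^n}\otimes(A+B)$ as $I_m^{\otimes^n}\otimes A + I_m^{\otimes^n}\otimes B$, while in each of the $n$ summands indexed by the projectors $C_i$ the two contributions $I_m^{\otimes^{i-1}}\otimes(\lambda C)\otimes I_m^{\otimes^{n-i}}\otimes C_i$ and $I_m^{\otimes^{i-1}}\otimes(\mu C)\otimes I_m^{\otimes^{n-i}}\otimes C_i$ recombine into $I_m^{\otimes^{i-1}}\otimes C\otimes I_m^{\otimes^{n-i}}\otimes C_i$, again by $\lambda+\mu=1$.

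No genuine difficulty arises; the only point that needs attention is the bookkeeping of how many copies of each summand appear on either side. On one side of a distributive law the term $I_m^{\otimes^n}\otimes A$ (respectively the block attached to each $C_i$) occurs exactly once, whereas on the other side it has to be reassembled from two scalar-weighted copies, and the normalization $\lambda+\mu=1$ — just like the reciprocal $\tfrac1h$ appearing in the scalar laws — is precisely what makes the reconstruction possible. This same observation explains why the naive distributive law (with $\lambda=\mu=1$) fails for $\wr$.
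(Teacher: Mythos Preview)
Your proof is correct and follows exactly the approach of the paper, which simply states that the identities follow from Definition~\ref{maindefinition}; you have merely spelled out the direct expansion via bilinearity and homogeneity of the Kronecker product that the paper leaves implicit.
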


\begin{proof}
It follows from Definition \ref{maindefinition}.
\end{proof}

 If we denote by $O_p$ the square zero matrix of order $p$, for every $p\in \mathbb{N}$, then we have:
\begin{itemize}
\item $A\wr O_m =  I_m^{\otimes^n} \otimes A$, for every $A\in \mathcal{M}_n(\mathbb{C})$;
\item $O_n \wr B = \sum_{i=1}^n I_m^{\otimes^{i-1}}\otimes B \otimes I_m^{\otimes^{n-i}}\otimes C_i$, for every $B\in \mathcal{M}_m(\mathbb{C})$.
\end{itemize}

Moreover, if one between $A$ or $B$ is the identity matrix, or a multiple, we obtain:
\begin{itemize}
\item $A\wr (hI_m) =  I_m^{\otimes^n} \otimes (A+hI_n)$, for every $A\in \mathcal{M}_n(\mathbb{C})$;
\item $(hI_n) \wr B = hI_{nm^n}+\sum_{i=1}^n I_m^{\otimes^{i-1}}\otimes B\otimes I_m^{\otimes^{n-i}}\otimes C_i$, for every $B\in \mathcal{M}_m(\mathbb{C})$.
\end{itemize}
As a consequence, for all $h,k\in \mathbb{C}$, one has:
$$
(hI_n) \wr (k I_m) = (h+k)I_{nm^n}.
$$
In particular, it follows that
\begin{eqnarray}\label{grossatragedia}
(hI_n) \wr (-h I_m) = O_{nm^n}, \qquad \forall h\in \mathbb{C}:
\end{eqnarray}
in other words, the wreath product of two scalar matrices (whose order is not necessarily the same) with opposite entries is zero.

Finally, in the particular cases where $n=1$ or $m=1$, we have:
$$
(a)\wr B = B+aI_m, \quad \forall a\in \mathbb{C}, B\in \mathcal{M}_m(\mathbb{C}); \quad \qquad A\wr (b) = A+bI_n, \quad \forall b\in \mathbb{C}, A\in \mathcal{M}_n(\mathbb{C}).
$$
\begin{prop}
The wreath product of two symmetric (resp. skew-symmetric, Hermi-\\tian) matrices is a symmetric (resp. skew-symmetric, Hermitian) matrix.
\end{prop}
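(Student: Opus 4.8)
The plan is to compute the transpose (respectively the conjugate transpose) of $A\wr B$ directly from Definition \ref{maindefinition}, using two standard facts about the Kronecker product: it distributes over transposition and conjugate transposition, i.e. $(X\otimes Y)^{\top}=X^{\top}\otimes Y^{\top}$ and $(X\otimes Y)^{*}=X^{*}\otimes Y^{*}$ (and these identities extend to any number of factors by associativity), and both $\top$ and $*$ are additive. The only extra ingredient needed is that the building blocks $I_m^{\otimes^k}$ and $C_i$ appearing in the definition are real symmetric matrices: $I_m$ is symmetric with real entries, hence so is every iterated Kronecker power $I_m^{\otimes^k}$, and each $C_i$ is a diagonal $0$--$1$ matrix, so $C_i^{\top}=C_i=\overline{C_i}$ as well.

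First I would treat the symmetric case. Assuming $A^{\top}=A$ and $B^{\top}=B$, apply $\top$ to
$$
A\wr B = I_m^{\otimes^n}\otimes A + \sum_{i=1}^n I_m^{\otimes^{i-1}}\otimes B\otimes I_m^{\otimes^{n-i}}\otimes C_i
$$
term by term. The first summand becomes $(I_m^{\otimes^n})^{\top}\otimes A^{\top}=I_m^{\otimes^n}\otimes A$, and the $i$-th summand becomes $I_m^{\otimes^{i-1}}\otimes B^{\top}\otimes I_m^{\otimes^{n-i}}\otimes C_i^{\top}=I_m^{\otimes^{i-1}}\otimes B\otimes I_m^{\otimes^{n-i}}\otimes C_i$, so $(A\wr B)^{\top}=A\wr B$. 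The Hermitian case is identical, replacing $\top$ by $*$ throughout and using $I_m^{*}=I_m$, $C_i^{*}=C_i$.

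The skew-symmetric case requires one extra remark, which is really the only point that needs attention. If $A^{\top}=-A$ and $B^{\top}=-B$, then in each summand of the defining formula exactly one tensor factor is skew-symmetric --- namely $A$ in the first summand, and $B$ in the $i$-th summand --- while all the remaining factors ($I_m^{\otimes^{k}}$ and $C_i$) are symmetric. Hence transposing introduces a single factor of $-1$ in each summand, giving $(A\wr B)^{\top}=-(I_m^{\otimes^n}\otimes A) - \sum_{i=1}^n I_m^{\otimes^{i-1}}\otimes B\otimes I_m^{\otimes^{n-i}}\otimes C_i = -(A\wr B)$. There is no genuine obstacle here: the proof is a routine verification, the only thing worth flagging being the sign bookkeeping just mentioned for the skew-symmetric case.
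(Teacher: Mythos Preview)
Your proof is correct and follows essentially the same approach as the paper: both rely on the identities $(X\otimes Y)^{T}=X^{T}\otimes Y^{T}$, $(X\otimes Y)^{*}=X^{*}\otimes Y^{*}$, associativity of the Kronecker product, linearity of (conjugate) transposition, and the fact that the auxiliary factors $I_m^{\otimes^k}$ and $C_i$ are real symmetric. The only organizational difference is that the paper first records the general formulae $(A\wr B)^{T}=A^{T}\wr B^{T}$ and $(A\wr B)^{*}=A^{*}\wr B^{*}$ and then deduces all three cases at once, whereas you treat the three cases separately; your version is slightly more explicit about $C_i^{T}=C_i$ and about the sign count in the skew-symmetric case, but the substance is the same.
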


\begin{proof}
It is known that the Kronecker product is associative, that is: $A \otimes (B\otimes C) = (A\otimes B)\otimes C$. Moreover, the Kronecker product satisfies the following property with respect to the transposition and conjugate transposition:
$$
(A\otimes B)^T = A^T\otimes B^T \qquad (A\otimes B)^\ast = A^\ast\otimes B^\ast.
$$
Therefore, it follows from Definition \ref{maindefinition}, from the linearity of the (conjugate) transposition and from the fact that
$$
I_m=I_m^\ast=I_m^T
$$
that one has:
$$
(A\wr B)^T = (A^T)\wr (B^T) \quad \textrm{and } \ (A\wr B)^\ast = (A^\ast)\wr (B^\ast), \qquad \forall A\in \mathcal{M}_n(\mathbb{C}), B\in \mathcal{M}_m(\mathbb{C}).
$$
The claim follows.
\end{proof}
Finally, we mention the following property of the trace of the wreath product of two matrices:
$$
tr(A\wr B) = m^{n-1}(m tr(A)+n tr(B)), \qquad \forall A\in \mathcal{M}_n(\mathbb{C}), B\in \mathcal{M}_m(\mathbb{C}),
$$
which is an easy consequence of the well known property $tr(A\otimes B)=tr(A)tr(B)$ of the Kronecker product.

\subsection{The block structure of $A\wr B$}\label{subsection21}
It follows from Definition \ref{maindefinition}, which makes use of the iterated Kronecker product of $n$ matrices of order $m$ and one matrix of order $n$, that the matrix $A\wr B$ has an \textit{iterated block structure}. More precisely, it can be regarded as an $m\times m$ block matrix, whose blocks are matrices of order $nm^{n-1}$, obtained by computing the first Kronecker product in each summand appearing in the definition.
On the other hand, each of these blocks has an $m\times m$ block structure with blocks of order $nm^{n-2}$. This argument can be recursively applied, until we get minimal blocks of order $n$, which may have the form $b_{ij}C_k$, where $b_{ij}$ is an entry of $B$, and $k\in \{1,\ldots, n\}$, or the form $A+\sum_{h=1}^n\mu_hC_h$, with some suitable coefficients $\mu_h\in \mathbb{C}$ (see Example \ref{example2222} for the case $n=m=2$). In the next lemma, we give a detailed description of such blocks.
\begin{lemma}\label{lemmadani0}
Let $A\in \mathcal{M}_n(\mathbb{C})$  and $B=(b_{ij})_{i,j=1,\ldots, m} \in \mathcal{M}_m(\mathbb{C})$.
\begin{enumerate}
\item Let $i,j\in\{1,\ldots, m\}$ and $k\in\{1,\ldots,n\}$, with $i\neq j$. Then the number of blocks of type $b_{ij}C_k$ appearing in $A\wr B$ is $m^{n-1}$. More precisely, in the matrix $A\wr B$ regarded as an $m^n\times m^n$ matrix whose entries are blocks of order $n$:
\begin{itemize}
\item the blocks of type $b_{ij}C_k$, with $k=2,\ldots, n-1$, appear at position
$$
((i-1)m^{n-k}+s+tm^{n-k+1}, (j-1)m^{n-k}+s+tm^{n-k+1}),
$$
with $s=1,\ldots, m^{n-k}$, $t=0,\ldots, m^{k-1}-1$;
\item the blocks of type $b_{ij}C_1$ appear at position
$$
((i-1)m^{n-1}+s, (j-1)m^{n-1}+s),
$$
with $s=1,\ldots, m^{n-1}$;
\item the blocks of type $b_{ij}C_n$ appear at position
$$
(i+tm, j+tm )
$$
with $t=0,\ldots, m^{n-1}-1$.
\end{itemize}

\item Let us order the set $\{b_{11}, \ldots, b_{mm}\}$ so that $b_{ii}<b_{jj}$ if $i<j$. Let
$$
\Omega = \{(b_1,\ldots, b_n) : b_i\in    \{b_{11}, \ldots, b_{mm}\}\}
$$
be the set of ordered $n$-tuples of elements from $\{b_{11}, \ldots, b_{mm}\}$, endowed with the following lexicographic order:
$$
(b_1, \ldots, b_n)< (b_1', \ldots, b_n'), \textrm{ if } i=\min\{h: \ b_h\neq b_h'\} \textrm{ is such that } b_i<b_i'.
$$
 Then the $p$-th diagonal entry of $A\wr B$, regarded as an $m^n\times m^n$ matrix whose entries are blocks of order $n$, is given by
$$
A+\sum_{h=1}^n b_{f_{h}+1,f_{h}+1} C_h,
$$
where $(f_1, \ldots,f _n)$ is such that $p=1+\sum_{h=1}^n f_hm^{n-h}$, with $f_h\in \{0,\ldots, m-1\}$. Moreover, $(f_1, \ldots,f _n)$ is the $p$-th $n$-tuple of $\Omega$, with respect to the lexicographic order introduced above. In particular, there is a bijection between the elements in $\Omega$ and the $m^n$ diagonal blocks of $A\wr B$ .
\end{enumerate}
\end{lemma}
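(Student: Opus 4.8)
The plan is to reduce the whole statement to the standard entrywise description of an iterated Kronecker product. For an index $p\in\{1,\dots,m^n\}$ I write $p=1+\sum_{\ell=1}^{n}f_\ell m^{n-\ell}$ with $f_\ell\in\{0,\dots,m-1\}$, so that $(f_1,\dots,f_n)$ is the base-$m$ expansion of $p-1$ ($f_1$ being the most significant digit). With this convention, if $M_1,\dots,M_n$ each have order $m$, then the $(p,q)$ entry of $M_1\otimes\cdots\otimes M_n$ is $\prod_{\ell=1}^{n}(M_\ell)_{f_\ell+1,\,g_\ell+1}$, where $(g_1,\dots,g_n)$ is the expansion of $q-1$. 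I would first record this, together with the observation that regarding $A\wr B$ as an $m^n\times m^n$ array of blocks of order $n$ simply means peeling off, in each summand of Definition \ref{maindefinition}, the last tensor factor — namely $A$ in the first summand and $C_i$ in the $i$-th of the remaining ones — and reading the coefficient of that block off the remaining Kronecker product of $n$ matrices of order $m$.

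Carrying this out, the summand $I_m^{\otimes^n}\otimes A$ contributes $A$ to the block in position $(p,p)$ and nothing off the diagonal, while the summand $\bigl(I_m^{\otimes^{i-1}}\otimes B\otimes I_m^{\otimes^{n-i}}\bigr)\otimes C_i$ contributes the block $\bigl(I_m^{\otimes^{i-1}}\otimes B\otimes I_m^{\otimes^{n-i}}\bigr)_{pq}\,C_i$, whose coefficient — by the entry formula above — equals $b_{f_i+1,\,g_i+1}$ if $f_\ell=g_\ell$ for every $\ell\neq i$, and $0$ otherwise. Hence the $(p,q)$-block of $A\wr B$ is
$$
\delta_{pq}\,A+\sum_{\substack{1\le i\le n\\ f_\ell=g_\ell\ \text{for all }\ell\neq i}} b_{f_i+1,\,g_i+1}\,C_i .
$$
Part (2) is then immediate: if $p=q$ all the conditions $f_\ell=g_\ell$ hold, so the $p$-th diagonal block is $A+\sum_{h=1}^{n}b_{f_h+1,f_h+1}C_h$, exactly as claimed with $(f_1,\dots,f_n)$ the expansion of $p-1$. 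For the bijection with $\Omega$, the hypothesis $b_{ii}<b_{jj}\Leftrightarrow i<j$ makes $b_{ii}\mapsto i-1$ an order isomorphism of $\{b_{11},\dots,b_{mm}\}$ onto $\{0,\dots,m-1\}$, and $p\mapsto(f_1,\dots,f_n)$ is by construction an order isomorphism of $\{1,\dots,m^n\}$ (usual order) onto $\{0,\dots,m-1\}^n$ with the lexicographic order; this is just the statement that counting in base $m$ respects the lexicographic order on digit strings.

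For Part (1), fix $i\neq j$ and $k$. The displayed block formula says that a block $b_{ij}C_k$ sits in position $(p,q)$ exactly when $f_k=i-1$, $g_k=j-1$, and $f_\ell=g_\ell$ for every $\ell\neq k$, the common digits being free in $\{0,\dots,m-1\}$; this already forces $p\neq q$ and gives $m^{n-1}$ positions. To obtain the stated parametrizations I substitute $f_k=i-1$ into $p=1+\sum_\ell f_\ell m^{n-\ell}$ and split the remaining sum as $\sum_{\ell>k}f_\ell m^{n-\ell}+\sum_{\ell<k}f_\ell m^{n-\ell}$: the first part runs over all of $\{0,\dots,m^{n-k}-1\}$, which I rename $s-1$, while every term of the second part is a multiple of $m^{n-k+1}$, so the second part equals $t\,m^{n-k+1}$ with $t=\sum_{\ell<k}f_\ell m^{k-1-\ell}$ ranging over $\{0,\dots,m^{k-1}-1\}$. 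This gives $p=(i-1)m^{n-k}+s+t\,m^{n-k+1}$, and the identical computation for $q$ yields the case $2\le k\le n-1$; the cases $k=1$ and $k=n$ are the degenerate ones in which the sum over $\ell<k$, respectively over $\ell>k$, is empty, and they collapse to the two special forms stated in the lemma. The only delicate part of the argument is precisely this reindexing: one must keep straight that the digits in positions below $k$ contribute the multiple $t\,m^{n-k+1}$, while those above $k$ sweep out a complete residue system modulo $m^{n-k}$ and produce the additive parameter $s$, and one must treat $k=1$ and $k=n$ separately; everything else is a mechanical consequence of the entrywise Kronecker formula.
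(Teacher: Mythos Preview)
Your proof is correct. For Part~(1) you and the paper are doing essentially the same computation in different languages: the paper locates the blocks $b_{ij}C_k$ by reading the $k$-th summand $I_m^{\otimes^{k-1}}\otimes B\otimes I_m^{\otimes^{n-k}}\otimes C_k$ as a nested block picture (the inner factor $I_m^{\otimes^{n-k}}$ produces the diagonal shift $s$, the outer factor $I_m^{\otimes^{k-1}}$ produces the $m^{k-1}$ copies shifted by $tm^{n-k+1}$), while you encode the same data via the base-$m$ digits of $p-1$. For Part~(2) the approaches genuinely diverge: the paper argues by recursively zooming into the diagonal, peeling off one $m\times m$ block layer at a time and watching the summands $b_{f_h+1,f_h+1}C_h$ accumulate stage by stage, whereas your single closed-form expression for the $(p,q)$-block gives the diagonal description in one step with no recursion. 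Your route is more economical and has the side benefit of a uniform description of \emph{every} block (including the zero off-diagonal ones where the digit strings differ in two or more places); the paper's recursive argument is more hands-on and perhaps conveys the nested block structure more vividly.
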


\begin{proof}
(1) Notice that, if we determine the explicit positions of the blocks of type $b_{ij}C_k$, then we automatically get the number of such blocks, which turns out to be equal to $m^{n-1}$. So it is enough to prove the second part of the statement. We treat here only the case $1<k<n$, the extremal cases being similar. First of all, observe that the blocks of type $b_{ij}C_k$ are given by the $k$-th summand
$$
I_m^{\otimes^{k-1}}\otimes B\otimes I_m^{\otimes^{n-k}}\otimes C_k.
$$
More precisely, the factor $b_{ij}\otimes I_m^{\otimes^{n-k}}\otimes C_k$ gives rise to $m^{n-k}$ consecutive blocks $b_{ij}C_k$ occupying the main diagonal of  $I_m^{\otimes^{n-k}}$. This contribution is taken into account by the index $s$ which represents a shift along the main diagonal of the corresponding block. Inside the matrix $A\wr B$, seen as a matrix of order $m^n$ with entries given by $n\times n$ blocks, we have $m^{k-1}$ copies of the same block $b_{ij}\otimes I_m^{\otimes^{n-k}}\otimes C_k$, because of the initial factor $I_m^{\otimes^{k-1}}$. These copies are shifted on the main diagonal by a factor $m^{n-k+1}$. This is taken into account by the index $t$ in the formula. The result follows by observing that the first pair of indices corresponding to a position occupied by $b_{ij}C_k$ is $$
((i-1)m^{n-k}+1, (j-1)m^{n-k}+1).
$$
\noindent (2) Let us think again of the matrix $A\wr B$ as a matrix of order $m^n$, whose entries are square matrices of order $n$. This $m^n\times m^n$ matrix can be regarded as a block matrix of order $m$, whose blocks are matrices of order $m^{n-1}$. Scrolling down, the block $M_k$ of order $m^{n-1}$ occupying the $k$-th diagonal entry of $A\wr B$ is of type
$$
I_m^{\otimes^{n-1}} \otimes A + b_{kk}I_m^{\otimes^{n-1}}\otimes C_1+ \sum_{i=1}^{n-1}I_m^{\otimes^{i-1}} \otimes B \otimes I_m^{\otimes^{n-i-1}}\otimes C_{i+1},
$$
with $k=1,\ldots, m$. In particular, all the blocks of order $n$ in the main diagonal of the $k$-th block $M_k$ of order $m^{n-1}$ of $A\wr B$ contain the summand $A+b_{kk}C_1$. Now notice that the entry $(p,p)$ of $A\wr B$, seen as a block matrix of order $m^n$, belongs to $M_k$ if in the expression $p=1+\sum_{h=1}^n f_hm^{n-h} $ the coefficient $f_1+1$ equals $k$. This implies that at the entry $(p,p)$ we will find a matrix containing the summand $A+b_{f_1+1,f_1+1}C_1$. Now zooming into this $(f_1+1)$-st block $M_{f_1+1}$ of order $m^{n-1}$ (containing $(p,p)$), we regard $M_{f_1+1}$ as a block matrix with blocks (on the main diagonal) $M_1',\ldots, M_m'$ of order $m^{n-2}$. Scrolling down, the blocks occupying the main diagonal  have the form
$$
I_m^{\otimes^{n-2}} \otimes A + b_{kk}I_m^{\otimes^{n-2}}\otimes C_1+ b_{tt}I_m^{\otimes^{n-2}}\otimes C_2  +
$$
$$
+\sum_{i=1}^{n-2} I_m^{\otimes^{i-1}}\otimes B \otimes I_m^{\otimes^{n-i-2}}\otimes C_{i+2},
$$
 with $t=1,\ldots, m$. In particular, all the blocks of order $n$ in the main diagonal of the $t$-th block $M_t'$ of order $m^{n-2}$ in $M_{f_1+1}$, contain the summand $A+b_{kk}C_1+b_{tt}C_2$.  Now notice that the entry $(p,p)$ of $A\wr B$ belongs to the block $M_t'$ (inside $M_k=M_{f_1+1}$) if the coefficient $f_2+1$ equals $t$. This implies that at the entry $(p,p)$ we will find a matrix containing the summand $A+b_{f_1+1,f_1+1}C_1+b_{f_2+1,f_2+1}C_2$. We can apply recursively this argument. It is easy to see that the order in which two different matrices of type $C:=A+\sum_{h=1}^n c_h C_h$ and $C':=A+\sum_{h=1}^n c_h' C_h$ appear in different diagonal entries coincides with the lexicographic order of $\Omega$ defined above. In fact, two different diagonal entries are occupied by different expressions $C$ and $C'$ because at some step of the recursive method they will belong to different submatrices $M_i^{(s)}$ and $M_j^{(s)}$ (with $i<j$, for example) of order $m^{n-s-1}$. This is equivalent to say that the following inequality in the lexicographic order in $\Omega$ holds: $(c_1, \ldots, c_n)<(c_1', \ldots, c_n')$.
 The $+1$ that appears in the equality $p=1+\sum_{h=1}^n f_hm^{n-h}$, takes into account the fact that the position (1,1) corresponds to $f_h=0$ for any $h$. It is clear from the construction that there is a bijection between the set of the $n$-tuples of $f_h$'s over $\{0,1,\ldots, m-1\}$ and the diagonal blocks of $A\wr B$ (regarded as a matrix of order $m^n$). The first of such sets uniquely determines the $n$-tuple $(b_1, \ldots, b_n)\in \Omega$ consisting of $n$ elements from $\{b_{11}, \ldots, b_{mm}\}$. This concludes the proof.
\end{proof}

\begin{example}\label{example2222}\rm
Let $A\in \mathcal{M}_2(\mathbb{C})$ and let $B= \left(
                                                   \begin{array}{ccc}
                                                     b_{11} & b_{12}  \\
                                                     b_{21} & b_{22} \\
                                                   \end{array}
                                                 \right)\in \mathcal{M}_2(\mathbb{C})$. Then $A\wr B$ is a square matrix of order $8$, which can be regarded as a square matrix of  order $4$, whose entries are square submatrices of order $2$:
$$
 \left(\begin{array}{cc|cc}
    A+b_{11}(C_1+C_2) & b_{12}C_2 & b_{12}C_1 & 0 \\
    b_{21}C_2 & A+b_{11}C_1+b_{22}C_2 & 0 & b_{12}C_1 \\
\hline
    b_{21}C_1 & 0 & A+b_{22}C_1+b_{11}C_2 & b_{12}C_2 \\
    0 & b_{21}C_1 & b_{21}C_2 & A+b_{22}(C_1+C_2) \\
\end{array}
\right).
$$
Observe that each block of type $b_{ij}C_k$, with $i\neq j$, appears twice off the main diagonal; moreover, all the blocks on the main diagonal are distinct, and ordered according with the lexicographic order of the pairs $\{(b_{ii},b_{jj}): i,j=1,2\}$.
\end{example}

A result of J. R. Silvester \cite{det} shows that, given a block matrix $M = (M_{ij})_{i,j=1,\ldots, h}\in \mathcal{M}_{nh}(\C)$, where each $M_{ij}$ is a square matrix of order $n$ over a field or a commutative ring, if all the blocks commute with each other, then the determinant $\det M$ of the full matrix equals the determinant of the $n\times n$ matrix $DET(M,n)$, which is the formal determinant of $M$ in terms of its block entries.

\begin{example}\rm
Let $M= \left(
          \begin{array}{ccc}
            M_{11} & M_{12} & M_{13} \\
            M_{21} & M_{22} & M_{23} \\
            M_{31} & M_{32} & M_{33} \\
          \end{array}
        \right)
$, where $M_{ij}$ is a submatrix of order $n$ of $M$, for all $i,j$, and suppose that $M_{ij}M_{rs}=M_{rs}M_{ij}$, for all $i,j,r,s=1,2,3$. Then $\det M$ equals the determinant of the $n\times n$ matrix
$$
M_{11}M_{22}M_{33} + M_{12}M_{23}M_{31}+M_{13}M_{21}M_{32}-M_{12}M_{21}M_{33}-M_{13}M_{22}M_{31}-M_{11}M_{23}M_{32}.
$$
\end{example}

Keeping in mind this property, we want to give some conditions to get the commutativity of the blocks of the matrix $A\wr B$ described in Lemma \ref{lemmadani0}.

\begin{lemma}
Let $A\in \mathcal{M}_n(\mathbb{C})$  and $B\in \mathcal{M}_m(\mathbb{C})$ be two nonzero matrices. Then the blocks of the matrix $A\wr B$, regarded as a matrix of order $m^{n}$ whose entries are matrices of order $n$, commute with each other if and only if $A$ is diagonal.
\end{lemma}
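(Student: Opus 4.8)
The plan is to prove the two implications separately, in both directions relying on the explicit inventory of the blocks of $A\wr B$ provided by Lemma~\ref{lemmadani0}: viewed as an $m^n\times m^n$ array of $n\times n$ matrices, every block of $A\wr B$ is either the zero block, or a block $b_{ij}C_k$ with $i\neq j$ and $1\le k\le n$, or a diagonal block of the form $A+\sum_{h=1}^n\mu_h C_h$ whose coefficients $\mu_h$ are diagonal entries of $B$. The implication ``$A$ diagonal $\Rightarrow$ the blocks commute'' is then immediate: each $C_k$ is the diagonal matrix unit $E_{kk}$, so if $A$ is diagonal, every block on this list is a diagonal matrix, and diagonal matrices of the same order commute.

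For the converse I would exploit the fact that $C_1,\dots,C_n$ are mutually orthogonal diagonal idempotents, $C_hC_k=\delta_{hk}C_h$, which together span the algebra of diagonal matrices. Hence any two blocks of the first two types commute automatically, and the only relations carrying information involve a diagonal block. The key computation is the commutator identity
\[\bigl[\,b_{ij}C_k,\; A+\textstyle\sum_{h=1}^n\mu_h C_h\,\bigr]=b_{ij}\,[C_k,A],\]
where the scalar terms collapse because $C_kC_h=\delta_{hk}C_k$. Thus, if $B$ has \emph{some} nonzero off-diagonal entry $b_{ij}$, then, since by Lemma~\ref{lemmadani0}(1) the block $b_{ij}C_k$ occurs in $A\wr B$ for every $k$, commutativity forces $[C_k,A]=0$ for all $k=1,\dots,n$; as $A$ then commutes with every diagonal matrix, it must be diagonal. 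If instead $B$ is diagonal but not scalar, then by Lemma~\ref{lemmadani0}(2) the diagonal blocks of $A\wr B$ realise all $n$-tuples of entries from $\{b_{11},\dots,b_{mm}\}$, so one can choose two diagonal blocks $A+D$ and $A+D'$ differing in a single coordinate, say $D'-D=(b_{ii}-b_{jj})C_h$ with $b_{ii}\neq b_{jj}$; then $[A+D,\,A+D']=[A,D'-D]=(b_{ii}-b_{jj})[A,C_h]=0$ again yields $[A,C_h]=0$, and letting $h$ vary forces $A$ diagonal.

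The point I expect to require the most care is the remaining degenerate case in which $B$ is a nonzero scalar matrix $\lambda I_m$ (which in particular covers $m=1$): here $A\wr B=I_m^{\otimes^n}\otimes(A+\lambda I_n)$, whose blocks are copies of $A+\lambda I_n$ on the diagonal and zero blocks off it, and these commute regardless of $A$. So this case must either be ruled out by hypothesis or recorded as an exception; away from it, the proof reduces to the short commutator identity above combined with the block description of Lemma~\ref{lemmadani0}, and I do not anticipate further difficulties.
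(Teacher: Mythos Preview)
Your approach matches the paper's: both arguments list the blocks via Lemma~\ref{lemmadani0}, note that the $C_i$ are mutually orthogonal diagonal idempotents, and reduce the question to whether $A$ commutes with each $C_i$. The paper is terser---after recording that $AC_i=C_iA$ iff the $i$-th row and column of $A$ vanish off the diagonal, it closes with ``it suffices to apply the distributivity of the matrix multiplication and the claim follows''---which tacitly assumes a nonzero off-diagonal block $b_{ij}C_k$ is available so that the commutator $[b_{ij}C_k,\,A+D]=b_{ij}[C_k,A]$ forces $[C_k,A]=0$.

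Your case split is more careful than the paper's. You handle the situation where $B$ is diagonal but non-scalar by comparing two diagonal blocks that differ in a single coordinate, obtaining $[A+D,A+D']=[A,D'-D]=(b_{ii}-b_{jj})[A,C_h]$, which is a step the paper does not spell out. More importantly, you are right that the scalar case $B=\lambda I_m$ with $\lambda\neq 0$ (and in particular $m=1$) is a genuine exception: then $A\wr B=I_{m^n}\otimes(A+\lambda I_n)$, whose blocks are $A+\lambda I_n$ and $O_n$ and commute for every $A$, so the ``only if'' direction fails. The paper's statement and proof simply do not address this, so what you have flagged is a correction to the lemma as stated rather than a defect in your own argument.
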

 \begin{proof}
We have seen in Lemma \ref{lemmadani0} that the blocks of order $n$ of the matrix
$$
A\wr B = I_m^{\otimes^n} \otimes A + \sum_{i=1}^n I_m^{\otimes^{i-1}}\otimes B \otimes I_m^{\otimes^{n-i}}\otimes C_i
$$
are either of type $A+\sum_{h=1}^nb_{f_h+1,f_h+1}C_h$ (on the main diagonal) or of type $b_{ij}C_k, i\neq j$ (off the main diagonal). It is clear that, if at least one between $A$ or $B$ is the zero matrix, then all these blocks commute with each other. For this reason, in this lemma we restrict our attention to the case $A\neq O_n$ and $B\neq O_m$. First of all, note that the matrices $C_i$'s commute with each other, more precisely we have
$$
C_iC_j = \left\{
           \begin{array}{ll}
             C_i & \hbox{if } i=j \\
             O_n & \hbox{if } i\neq j.
           \end{array}
         \right.
$$
The matrix $A$ commutes with all the $C_i$'s if and only if it is a diagonal matrix. In fact, $A C_i$ is the matrix with all zeros, except for the $i$-th column which equals that of $A$. On the other hand, $C_iA$  is the matrix with all zeros, except for the $i$-th row which equals that of $A$. Therefore, fixed an index $i\in \{1,\ldots, n\}$, we have $AC_i=C_iA$ if and only if one has $a_{ij}=a_{ji}=0$, for each $j\neq i$. Now it suffices to apply the distributivity of the matrix multiplication and the claim follows.
\end{proof}

\begin{remark}\rm
We observe that, in this setting of a diagonal matrix $A$, after computing the determinant of the block matrix $DET(A\wr B, n)$, we get a polynomial expression in terms of the matrices $A$ and $C_i$'s; in fact, it follows from Lemma \ref{lemmadani0} that the diagonal entries are of type $A+\sum_{h=1}^n b_{f_h+1,f_h+1} C_h$, with $f_h\in \{0,\ldots, m-1\}$, and the entries off the main diagonal are of type $b_{ij}C_h$, for some $i,j\in \{1,\ldots, m\}$, $i\neq j$, and $ h \in \{1,\ldots,n\}$.
Moreover, the matrix corresponding to the expression given by this polynomial (whose determinant equals the determinant of the full matrix $A\wr B$) is diagonal as a sum of products of diagonal matrices. This implies that the matrix $DET(A\wr B,n)$ has full rank (and so $A\wr B$ has full rank) if and only if the matrix $DET(A\wr B,n)$ contains no zeros on the main diagonal.
\end{remark}

We can summarize our discussion with the following result.

\begin{prop}
Let $A\in \mathcal{M}_n(\C)$ be a diagonal matrix. Then the only nonzero contribution (up to the sign) to $DET(A\wr B, n)$ are of the following types:
\begin{enumerate}
\item $\left(\prod_{h=1}^{m^n} t_h\right) C_k$, for some $t_h\in \{b_{ij}: i\neq j\}$ and $k\in\{1,\ldots, n\}$;\\
\item $\prod_{r=1}^{s}\left(A+\sum_{q_r=1}^n b_{q_r} C_{q_r}\right)\times \left(\prod_{h=1}^{m^n-s} t_h\right) C_k$, for some $b_{q_r}\in \{b_{ii}: i=1,\ldots, n\}$, $1\leq s\leq m^n$, $t_h\in \{b_{ij}: i\neq j\}$ and $k\in\{1,\ldots, n\}$.
\end{enumerate}
\end{prop}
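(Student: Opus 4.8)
The plan is to reduce the computation of $\det(A\wr B)$ to that of the formal block determinant $DET(A\wr B,n)$ and then to inspect the latter one summand at a time. Since $A$ is diagonal, the preceding lemma on commutativity of the blocks of $A\wr B$ tells us that the $n\times n$ blocks, viewed as an $m^n\times m^n$ block matrix, pairwise commute, so Silvester's result \cite{det} applies: $\det(A\wr B)=\det\big(DET(A\wr B,n)\big)$, with $DET(A\wr B,n)=\sum_{\sigma\in S_{m^n}}\mathrm{sgn}(\sigma)\,\prod_{p=1}^{m^n}M_{p\sigma(p)}$, where $M_{pq}$ denotes the $(p,q)$ block and the blocks are handled as commuting symbols. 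By Lemma \ref{lemmadani0}, every diagonal block has the shape $M_{pp}=A+\sum_{h=1}^{n}b_{f_h+1,f_h+1}C_h$ with $f_h\in\{0,\ldots,m-1\}$, while for $p\neq q$ the block $M_{pq}$ is either $O_n$ or of the form $b_{ij}C_k$ with $i\neq j$ and $k\in\{1,\ldots,n\}$.

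Next I would fix $\sigma\in S_{m^n}$ and analyze the summand $\mathrm{sgn}(\sigma)\prod_{p}M_{p\sigma(p)}$, which is nonzero only when $M_{p\sigma(p)}\neq O_n$ for every $p$. Writing $S=\mathrm{Fix}(\sigma)$ and $s=|S|$, and using that the blocks commute, the summand equals $\pm\Big(\prod_{p\in S}M_{pp}\Big)\Big(\prod_{p\notin S}M_{p\sigma(p)}\Big)$, i.e. a product of $s$ diagonal blocks of the shape above times a product of $m^n-s$ off-diagonal blocks $b_{i_pj_p}C_{k_p}$ with $i_p\neq j_p$. The crucial computational input is the rule $C_iC_j=\delta_{ij}C_i$ (recalled in the proof of the preceding lemma), together with $AC_i=C_iA$ valid for diagonal $A$: the product of the off-diagonal factors is $\big(\prod_{p\notin S}b_{i_pj_p}\big)\prod_{p\notin S}C_{k_p}$, which vanishes unless all the indices $k_p$ share a common value $k$, in which case it equals $\big(\prod_{p\notin S}b_{i_pj_p}\big)C_k$.

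It then remains to read off the two cases. If $s=0$ (that is, $\sigma$ is a derangement) the surviving summand is $\pm\big(\prod_{p=1}^{m^n}b_{i_pj_p}\big)C_k$ with all $i_p\neq j_p$, a contribution of type (1). If $1\leq s\leq m^n$, the surviving summand is $\pm\prod_{p\in S}\big(A+\sum_{h=1}^{n}b_{f_h+1,f_h+1}C_h\big)\cdot\big(\prod_{p\notin S}b_{i_pj_p}\big)C_k$, a contribution of type (2): the inner coefficients lie in $\{b_{ii}:i=1,\ldots,n\}$, the $t_h=b_{i_pj_p}$ lie in $\{b_{ij}:i\neq j\}$, and in the extreme case $s=m^n$ (the identity permutation) the off-diagonal product is empty and the term is the pure product of diagonal blocks. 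Since every nonzero term of $DET(A\wr B,n)$ arises from some $\sigma$ in exactly this way, these two types exhaust all nonzero contributions and the proposition follows.

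The whole argument is short once Lemma \ref{lemmadani0}, Silvester's formula and the multiplication rule $C_iC_j=\delta_{ij}C_i$ are available; the only point requiring a little care is the bookkeeping of the fixed-point set of $\sigma$ and the degenerate ranges $s=0$ and $s=m^n$, which is why I have isolated them above.
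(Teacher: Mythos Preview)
Your argument is correct and follows essentially the same route as the paper's proof: both rely on Lemma~\ref{lemmadani0} to identify the block types, on Silvester's result to pass to the formal block determinant, and on the multiplication rule $C_iC_j=\delta_{ij}C_i$ to see that a product of off-diagonal blocks survives only when all the $C_{k_p}$ coincide. The paper's proof is a brief sketch of exactly this reasoning; you have simply made the permutation expansion and the fixed-point bookkeeping explicit.
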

\begin{proof}
The proof easily follows from Lemma \ref{lemmadani0}. The only nonzero summands that appear in the determinant are either those in which each factor is a scalar multiple of the same matrix $C_k$, for some $k=1,\ldots, n$, or those in which the factors are scalar multiples of the same matrix $C_k$, together with some blocks from the main diagonal. These two cases are described by $(1)$ and $(2)$ in the assertion, respectively.
\end{proof}

In Corollary \ref{otello}, we will give an explicit description of the determinant of $A\wr B$ for a diagonal $A$ and a uniform $B$, by using the spectral analysis developed in Section \ref{sectionspectrum} for $A\wr B$, in the more general case of a circulant matrix $B$.


\section{Wreath product of graphs and the Lamplighter random walk}\label{section3}

We start this section by recalling the classical definition of wreath product of two graphs \cite{gc,erschler}, then we show that it is a graph-analogue of the wreath product of matrices introduced in Definition \ref{maindefinition}. More precisely, we prove in Theorem \ref{thmcorrespondence} that the wreath product of the normalized adjacency matrices of two (regular) finite graphs provides the normalized adjacency matrix of the wreath product of the graphs. We use the symbol $\sim$ to denote adjacency of two vertices in a graph.

 \begin{defi}\label{defierschler}
Let $\mathcal{G}_1=(V_1, E_1)$ and $\mathcal{G}_2=(V_2,E_2)$ be
two finite graphs. The wreath product $\mathcal{G}_1\wr
\mathcal{G}_2$ is the graph with vertex set $V_2^{V_1}\times V_1=
\{(f,v) | f:V_1\to V_2, \ v\in V_1\}$, where $(f,v)\sim (f',v')$ if:
\begin{enumerate}
\item either $v=v'=:\overline{v}$ and $f(w)=f'(w)$ for every $w\neq \overline{v}$,
and $f(\overline{v})\sim f'(\overline{v})$ in $\mathcal{G}_2$;
\item or $f(w)=f'(w)$, for every $w\in
V_1$, and $v\sim v'$ in $\mathcal{G}_1$.
\end{enumerate}
\end{defi}
Observe that, if $\mathcal{G}_1$ is a
$d_1$-regular graph on $n_1$ vertices and
$\mathcal{G}_2$ is a $d_2$-regular graph on $n_2$ vertices, then $\mathcal{G}_1\wr \mathcal{G}_2$ is a
$(d_1+d_2)$-regular graph on $n_1n_2^{n_1}$ vertices.

The wreath product of graphs represents a graph-analogue of the
classical wreath product of groups: more precisely (see Theorem 2.1 in \cite{gc}), it is true that, with a particular choice of the
generating sets, the wreath
product of the Cayley graphs of two finite groups is the Cayley
graph of the wreath product of the groups (see also a generalization of this construction therein).

It is a classical fact (see, for instance, \cite{woe}) that the simple random walk on the wreath product $\mathcal{G}_1\wr
\mathcal{G}_2$ of two graphs is the so-called \textit{Lamplighter random walk}, according with the following interpretation:
suppose that at each vertex of $\mathcal{G}_1$ there is a lamp, whose possible states (or colors) are
represented by the vertices of $\mathcal{G}_2$ (the \textit{color graph}), so that the vertex $(f,v)$ of $\mathcal{G}_1\wr
\mathcal{G}_2$ represents the configuration of the $|V_1|$ lamps at each vertex of $\mathcal{G}_1$ (for each vertex $u\in V_1$, the lamp at $u$ is in the state $f(u) \in V_2$), together with the position $v$ of a lamplighter walking on the graph $\mathcal{G}_1$. At each step, the lamplighter may either go to a neighbor of the
current vertex $v$ and leave all lamps unchanged (this situation corresponds to edges of type (2) in $\mathcal{G}_1\wr
\mathcal{G}_2$), or he may stay at the vertex $v \in \mathcal{G}_1$, but he changes the state of the lamp which is in $v$ to a
neighbor state in $\mathcal{G}_2$ (this situation corresponds to edges of type (1) in $\mathcal{G}_1\wr
\mathcal{G}_2$): this model is also called the
\lq\lq \textit{Walk or switch}\rq\rq model.

Recall that the \textit{normalized adjacency matrix} of an (undirected) $d$-regular graph $\mathcal{G}=(V,E)$ is the square matrix $A=(a_{ij})$ of order $|V|$, whose entry $a_{ij}$ equals the number of (undirected) edges connecting the $i$-th and the $j$-th vertex of $\mathcal{G}$, divided by the degree $d$. We are now ready to prove the following theorem.

\begin{theorem}\label{thmcorrespondence}
Let $A_1$ be the normalized adjacency matrix of a $d_1$-regular graph $\mathcal{G}_1=(V_1, E_1)$ and let $A_2$ be the normalized adjacency matrix of a $d_2$-regular graph $\mathcal{G}_2=(V_2, E_2)$. Put $|V_i|=n_i$, for $i=1,2$. Then the wreath product $\left(\frac{d_1}{d_1+d_2}A_1\right)\wr \left(\frac{d_2}{d_1+d_2}A_2\right)$ is the normalized adjacency matrix of the graph wreath product $\mathcal{G}_1\wr \mathcal{G}_2$.
\end{theorem}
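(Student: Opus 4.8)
The plan is to compare, entry by entry, the normalized adjacency matrix of $\mathcal{G}_1 \wr \mathcal{G}_2$ with the matrix $M := \left(\frac{d_1}{d_1+d_2}A_1\right)\wr \left(\frac{d_2}{d_1+d_2}A_2\right)$, using the combinatorial description of the vertex set $V_2^{V_1}\times V_1$ and of the two edge types in Definition \ref{defierschler}. First I would fix an indexing: identify $V_1 = \{1,\dots,n_1\}$ and $V_2 = \{1,\dots,n_2\}$, and index a vertex $(f,v)$ of $\mathcal{G}_1\wr\mathcal{G}_2$ by the pair consisting of the $n_1$-tuple $(f(1),\dots,f(n_1)) \in V_2^{n_1}$ and the element $v \in V_1$. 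Since $\mathcal{G}_1\wr\mathcal{G}_2$ is $(d_1+d_2)$-regular on $n_1 n_2^{n_1}$ vertices, its normalized adjacency matrix has order $n_1 n_2^{n_1}$, which matches the order $n m^n$ of $M$ with $n = n_1$, $m = n_2$; so the sizes agree, and the first task is to make the tensor indexing of Definition \ref{maindefinition} coincide with the above indexing of the vertices.

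Next I would split $M$ according to its defining sum: $M = I_m^{\otimes n}\otimes \left(\frac{d_1}{d_1+d_2}A_1\right) + \sum_{i=1}^n I_m^{\otimes (i-1)}\otimes \left(\frac{d_2}{d_1+d_2}A_2\right)\otimes I_m^{\otimes(n-i)}\otimes C_i$, and interpret the two groups of summands separately. The term $I_m^{\otimes n}\otimes \left(\frac{d_1}{d_1+d_2}A_1\right)$ is block-diagonal in the lamp-configuration coordinate: it connects $(f,v)$ to $(f,v')$ with weight $\frac{d_1}{d_1+d_2}(A_1)_{vv'}$, i.e. it contributes exactly the type-(2) edges, and the normalization factor $\frac{d_1}{d_1+d_2}$ together with $A_1$ being the $d_1$-normalized adjacency matrix produces precisely the weight $(\text{number of } \mathcal{G}_1\text{-edges } v\sim v')/(d_1+d_2)$. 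For the $i$-th summand in the sum, the factor $C_i$ forces the walker's coordinate to be $i$ in both the row and the column (so $v=v'=i$), the factor $\frac{d_2}{d_1+d_2}A_2$ sitting in the $i$-th tensor slot changes the $i$-th lamp from $f(i)$ to $f'(i)$ with weight $\frac{d_2}{d_1+d_2}(A_2)_{f(i)f'(i)}$, and the identity factors in the other slots force $f(w)=f'(w)$ for $w\neq i$; summing over $i$ reproduces exactly the type-(1) edges with the correct normalized weight. Since type-(1) and type-(2) adjacencies are mutually exclusive (the first keeps $v$ fixed while changing a lamp, the second keeps all lamps fixed while moving $v$; the only overlap would require both $v=v'$ and $f=f'$, which is not an edge), the two contributions never land on the same off-diagonal entry, and adding them gives the full normalized adjacency matrix of $\mathcal{G}_1\wr\mathcal{G}_2$.

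The main obstacle I anticipate is purely bookkeeping: verifying that the position of the nonzero entries of $I_m^{\otimes (i-1)}\otimes A_2 \otimes I_m^{\otimes(n-i)}\otimes C_i$, under the chosen tensor-to-vertex identification, matches the combinatorial condition "$v=v'=i$, $f(w)=f'(w)$ for $w\neq i$, $f(i)\sim f'(i)$." Here the block-structure description in Lemma \ref{lemmadani0} — which already pins down where the blocks $b_{ij}C_k$ and the diagonal blocks $A+\sum_h b_{f_h+1,f_h+1}C_h$ sit inside $A\wr B$ — does essentially all the work: reading that lemma with $B = \frac{d_2}{d_1+d_2}A_2$ and $A = \frac{d_1}{d_1+d_2}A_1$ and matching the index $f_h+1$ with the value $f(h)$ of the $h$-th lamp gives the identification directly. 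I would also note the diagonal entries: Lemma \ref{lemmadani0}(2) shows the $(f,f)$-block equals $A + \sum_{h=1}^n (A_2)_{f(h)f(h)}C_h$ (up to the normalization scalars), and since $\mathcal{G}_1,\mathcal{G}_2$ are simple graphs both $A_1$ and $A_2$ have zero diagonal, so all diagonal entries of $M$ vanish — consistent with $\mathcal{G}_1\wr\mathcal{G}_2$ being a simple graph without loops. Assembling these observations completes the proof.
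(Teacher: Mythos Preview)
Your proposal is correct and follows essentially the same approach as the paper: fix an ordering of the vertices of $\mathcal{G}_1\wr\mathcal{G}_2$ compatible with the iterated tensor structure, then identify the summand $I_{n_2}^{\otimes n_1}\otimes A_1$ with the type-(2) edges and each summand $I_{n_2}^{\otimes(i-1)}\otimes A_2\otimes I_{n_2}^{\otimes(n_1-i)}\otimes C_i$ with the type-(1) edges at position $i$, checking the normalization factors along the way. Your explicit invocation of Lemma~\ref{lemmadani0} and the disjointness/diagonal checks are additional bookkeeping that the paper leaves implicit, but the underlying argument is the same.
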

\begin{proof}
First of all, observe that the wreath product $\left(\frac{d_1}{d_1+d_2}A_1\right)\wr \left(\frac{d_2}{d_1+d_2}A_2\right)$ is a square matrix of order $n_1n_2^{n_1}$, which can be rewritten as
$$
\frac{d_1}{d_1+d_2}\left(I_{n_2}^{\otimes^{n_1}} \otimes A_1\right) + \frac{d_2}{d_1+d_2}\left(\sum_{i=1}^{n_1}I_{n_2}^{\otimes^{i-1}}\otimes A_2\otimes I_{n_2}^{\otimes^{n_1-i}}\otimes C_i\right).
$$
In order to show the assertion, it suffices to order the vertices $\{(f,v): f: V_1\to V_2, v\in V_1\}$ of $\mathcal{G}_1\wr \mathcal{G}_2$ in the following suitable way.\\ \indent First of all, let $V_1=\{v_1, \ldots, v_{n_1}\}$ and $V_2=\{u_1, \ldots, u_{n_2}\}$, both with the natural ordering such that $v_i$ precedes $v_{i+1}$ in $V_1$, and $u_j$ precedes $u_{j+1}$ in $V_2$. Observe that these orderings reflect on the rows and columns of the matrices $A_1$ and $A_2$, respectively. Let us introduce a lexicographic ordering in $V_2^{V_1}$, by saying that $f$ precedes $g$ in $V_2^{V_1}$ if there exists an index $i\in \{1,\ldots, n_1\}$ such that $f(v_i)$ precedes $g(v_i)$ in $V_2$, and $f(v_h) = g(v_h)$ for each $1\leq h <i$. Finally, we say that $(f,v_i)$ precedes $(g,v_j)$ in $V_2^{V_1}\times V_1$ if
\begin{itemize}
\item either $f$ precedes $g$ in $V_2^{V_1}$;
\item or $f=g$, and $v_i$ precedes $v_j$ in $V_1$.
\end{itemize}
Observe that, with this ordering, the summand
$$
I_{n_2}^{\otimes^{n_1}} \otimes A_1
$$
corresponds to edges of type (2) in $\mathcal{G}_1\wr \mathcal{G}_2$, namely $(f,v_h)\sim (f,v_k)$, for some $f:V_1\to V_2$ and with $v_h\sim v_k$ in $\mathcal{G}_1$. As the total degree of the graph $\mathcal{G}_1\wr \mathcal{G}_2$ is $d_1+d_2$, and the degree of $\mathcal{G}_1$ is $d_1$, such a summand must be multiplied by the factor $\frac{d_1}{d_1+d_2}$.

On the other hand, the summand
$$
I_{n_2}^{\otimes^{i-1}}\otimes A_2\otimes I_{n_2}^{\otimes^{n_1-i}}\otimes C_i
$$
corresponds to edges of type (1) in $\mathcal{G}_1\wr \mathcal{G}_2$: more precisely, $(f,v_i)\sim (g,v_i)$, with $f(v_j) = g(v_j)$ for each $v_j\neq v_i$ and $f(v_i)\sim g(v_i)$ in $\mathcal{G}_2$. In particular, the matrix $C_i$ takes into account the fact that the lamplighter stays at the vertex $v_i$. Since the total degree of $\mathcal{G}_1\wr \mathcal{G}_2$ is $d_1+d_2$, and the degree of $\mathcal{G}_2$ is $d_2$, this second summand must be multiplied by the factor $\frac{d_2}{d_1+d_2}$. This concludes the proof.
\end{proof}

\begin{corollary}\label{corollarylamp}
Let $\mathcal{G}_1=(V_1, E_1)$ be a regular graph of degree $d_1$, and let $\mathcal{G}_2=(V_2, E_2)$ be a regular graph of degree $d_2$. Let $A_1$ (resp. $A_2$)  be the normalized adjacency matrix of $\mathcal{G}_1$ (resp. $\mathcal{G}_2$). Then the wreath product $\left(\frac{d_1}{d_1+d_2}A_1\right)\wr \left(\frac{d_2}{d_1+d_2}A_2\right)$ is the transition matrix of the \lq\lq Walk or switch\rq\rq Lamplighter random walk on the graph $\mathcal{G}_1$, with color graph $\mathcal{G}_2$.
\end{corollary}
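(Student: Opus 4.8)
The plan is to deduce this statement directly from Theorem \ref{thmcorrespondence} together with the description of the Lamplighter random walk recalled after Definition \ref{defierschler}. The first ingredient I would use is the elementary fact that the transition matrix of the \emph{simple} random walk on a $d$-regular graph $\mathcal{G}$ coincides with the normalized adjacency matrix of $\mathcal{G}$: from any vertex one moves to each of its $d$ neighbors with probability $1/d$, which is precisely the entry prescribed by the normalized adjacency matrix as defined above.

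Next I would apply this to the graph $\mathcal{G}_1\wr \mathcal{G}_2$, which (as observed after Definition \ref{defierschler}) is a $(d_1+d_2)$-regular graph on $n_1 n_2^{n_1}$ vertices. Hence the transition matrix of the simple random walk on $\mathcal{G}_1\wr \mathcal{G}_2$ is the normalized adjacency matrix of $\mathcal{G}_1\wr \mathcal{G}_2$, and by Theorem \ref{thmcorrespondence} the latter equals $\left(\frac{d_1}{d_1+d_2}A_1\right)\wr \left(\frac{d_2}{d_1+d_2}A_2\right)$.

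Finally I would recall, as explained in the discussion preceding Theorem \ref{thmcorrespondence}, that the simple random walk on $\mathcal{G}_1\wr \mathcal{G}_2$ is exactly the ``Walk or switch'' Lamplighter random walk on $\mathcal{G}_1$ with color graph $\mathcal{G}_2$: from a configuration $(f,v)$ the $d_1+d_2$ neighbors split into the $d_1$ neighbors of type (2)---a move of the lamplighter to a $\mathcal{G}_1$-neighbor of $v$ with all lamps unchanged---and the $d_2$ neighbors of type (1)---a change of the lamp sitting at $v$ to a $\mathcal{G}_2$-neighbor state, the lamplighter staying put. Choosing each neighbor with probability $\frac{1}{d_1+d_2}$ therefore amounts to walking with probability $\frac{d_1}{d_1+d_2}$ (uniformly among the $\mathcal{G}_1$-neighbors of $v$) and switching with probability $\frac{d_2}{d_1+d_2}$ (uniformly among the $\mathcal{G}_2$-neighbor states of $f(v)$), which is precisely the transition rule of the ``Walk or switch'' model. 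Chaining the three observations yields the corollary.

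Since the statement is essentially a probabilistic reformulation of Theorem \ref{thmcorrespondence}, I do not expect a genuine obstacle. The only point deserving a word of care---rather than a difficulty---is matching the block structure of $\left(\frac{d_1}{d_1+d_2}A_1\right)\wr \left(\frac{d_2}{d_1+d_2}A_2\right)$, governed by Lemma \ref{lemmadani0} and by the vertex ordering of $V_2^{V_1}\times V_1$ fixed in the proof of Theorem \ref{thmcorrespondence}, with the two types of transitions of the Lamplighter walk; but this identification has already been carried out there, so it suffices to invoke it.
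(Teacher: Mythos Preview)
Your argument is correct and is precisely the reasoning the paper has in mind: the corollary is stated without proof, as an immediate consequence of Theorem \ref{thmcorrespondence} together with the identification (recalled just before that theorem) of the simple random walk on $\mathcal{G}_1\wr \mathcal{G}_2$ with the ``Walk or switch'' Lamplighter model. Your write-up simply makes explicit the two standard facts the paper leaves to the reader.
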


\begin{example}\rm
Let $\mathcal{G}_1$ be the cyclic graph of length $3$, and consider on the graph $\mathcal{G}_1$ the \lq\lq Walk or switch\rq\rq Lamplighter random walk, where the lamp set consists of exactly two colors, let they be $0$ and $1$: such colors can be identified with the vertices of the segment graph $\mathcal{G}_2$ (see Figure \ref{figuragrafi}). In other words, we can write $V_1=\{0,1,2\}$ and $V_2=\{0,1\}$. The normalized adjacency matrices are
$$
A_1= \frac{1}{2}\left(
                  \begin{array}{ccc}
                    0 & 1 & 1 \\
                    1 & 0 & 1 \\
                    1 & 1 & 0 \\
                  \end{array}
                \right)      \qquad \textrm{and } \qquad A_2= \left(
                                                               \begin{array}{cc}
                                                                 0 & 1 \\
                                                                 1 & 0 \\
                                                               \end{array}
                                                             \right).
$$
Then the wreath product $\left(\frac{2}{3}A_1\right)\wr \left(\frac{1}{3}A_2\right)$ is the transition matrix of the \lq\lq Walk or switch\rq\rq  \\Lamplighter random walk on $\mathcal{G}_1$. The graph $\mathcal{G}_1\wr \mathcal{G}_2$ is represented in Figure \ref{figuraprodotto}. The vertex denoted by $u_1u_2u_3,v_i$ represents the situation where the lamplighter is at the vertex $v_i$ of $\mathcal{G}_1$, and $u_j\in V_2$ is the state of the lamp at the vertex $v_j\in V_1$.
\begin{center}
\begin{figure}[h]
\begin{picture}(400,52)\unitlength=0,3mm
\letvertex A=(130,5)\letvertex B=(180,5)\letvertex C=(155,48)   \letvertex D=(250,25)\letvertex E=(300,25)

\put(124,-8){$0$}\put(179,-8){$1$}\put(152,54){$2$}\put(246,10){$0$}\put(297,10){$1$}


 \drawvertex(A){$\bullet$}\drawvertex(B){$\bullet$}\drawvertex(C){$\bullet$}\drawvertex(D){$\bullet$}
\drawvertex(E){$\bullet$}

\drawundirectededge(A,B){}\drawundirectededge(A,C){}\drawundirectededge(B,C){}
\drawundirectededge(D,E){}
\end{picture}\caption{The graphs $\mathcal{G}_1$ and $\mathcal{G}_2$.}\label{figuragrafi}
\end{figure}
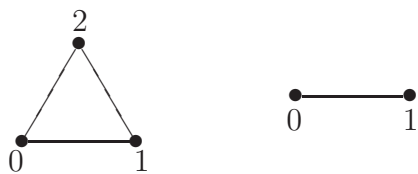
\end{center}
\centering
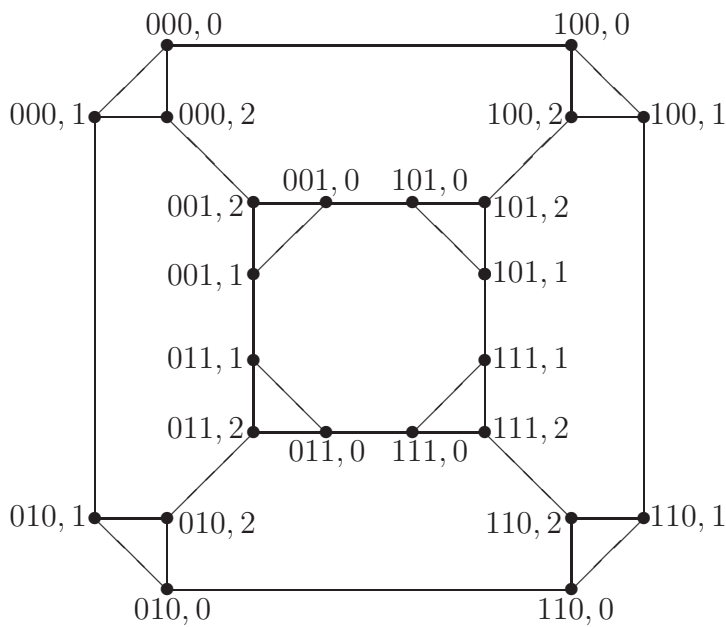
\begin{figure}[h]
\begin{picture}(250,260)\unitlength=0,19mm
\letvertex A=(10,340)\letvertex B=(10,60)\letvertex C=(60,10)\letvertex D=(340,10)\letvertex E=(390,60)\letvertex F=(390,340)\letvertex G=(340,390)
\letvertex H=(60,390)\letvertex I=(60,340)\letvertex L=(60,60)\letvertex M=(340,60)\letvertex N=(340,340)\letvertex O=(120,280)
\letvertex P=(120,230)\letvertex Q=(120,170)\letvertex R=(120,120)\letvertex S=(170,120)\letvertex T=(230,120)\letvertex U=(280,120)
\letvertex V=(280,170)\letvertex Z=(280,230)\letvertex X=(280,280)\letvertex Y=(230,280)\letvertex W=(170,280)

\put(45,398){$000,0$}\put(-49,335){$000,1$}\put(68,335){$000,2$}\put(37,-10){$010,0$}\put(-49,55){$010,1$}\put(68,50){$010,2$}
\put(327,398){$100,0$}\put(394,335){$100,1$}\put(281,335){$100,2$}\put(316,-10){$110,0$}\put(394,55){$110,1$}\put(280,50){$110,2$}
\put(60,223){$001,1$}\put(140,289){$001,0$}\put(60,271){$001,2$}\put(60,116){$011,2$}\put(144,100){$011,0$}\put(60,165){$011,1$}
\put(285,116){$111,2$}\put(215,100){$111,0$}\put(285,165){$111,1$}\put(285,270){$101,2$}\put(215,289){$101,0$}\put(285,225){$101,1$}


\drawvertex(A){$\bullet$}\drawvertex(B){$\bullet$}\drawvertex(C){$\bullet$}\drawvertex(D){$\bullet$}
\drawvertex(E){$\bullet$}\drawvertex(F){$\bullet$}\drawvertex(G){$\bullet$}\drawvertex(I){$\bullet$}\drawvertex(M){$\bullet$}\drawvertex(N){$\bullet$}
\drawvertex(H){$\bullet$}\drawvertex(L){$\bullet$}\drawvertex(O){$\bullet$}\drawvertex(T){$\bullet$}
\drawvertex(P){$\bullet$}\drawvertex(U){$\bullet$}\drawvertex(Q){$\bullet$}\drawvertex(V){$\bullet$}
\drawvertex(R){$\bullet$}\drawvertex(Z){$\bullet$}\drawvertex(Y){$\bullet$}\drawvertex(Z){$\bullet$}
\drawvertex(S){$\bullet$}\drawvertex(X){$\bullet$}\drawvertex(W){$\bullet$}

\drawundirectededge(A,B){}\drawundirectededge(A,H){}\drawundirectededge(A,I){}\drawundirectededge(B,L){}
\drawundirectededge(B,C){}\drawundirectededge(C,L){}\drawundirectededge(C,D){}\drawundirectededge(D,E){}
\drawundirectededge(D,M){}\drawundirectededge(E,M){}\drawundirectededge(E,F){}
\drawundirectededge(F,G){}\drawundirectededge(F,N){}\drawundirectededge(G,N){}\drawundirectededge(G,H){}
\drawundirectededge(H,I){}\drawundirectededge(I,O){}\drawundirectededge(O,P){}\drawundirectededge(O,W){}\drawundirectededge(P,Q){}
\drawundirectededge(Q,R){}\drawundirectededge(Q,S){}\drawundirectededge(R,L){}
\drawundirectededge(R,S){}\drawundirectededge(S,T){}\drawundirectededge(T,U){}\drawundirectededge(T,V){}
\drawundirectededge(U,M){}\drawundirectededge(U,V){}\drawundirectededge(V,Z){}\drawundirectededge(Z,Y){}\drawundirectededge(Z,X){}
\drawundirectededge(N,X){}\drawundirectededge(X,Y){}\drawundirectededge(Y,W){}\drawundirectededge(P,W){}
\end{picture} \caption{The graph $\mathcal{G}_1\wr \mathcal{G}_2$.}\label{figuraprodotto}
\end{figure}
\end{example}

In Subsection \ref{amilcare} we will determine the spectrum of the Lamplighter random walk with two colors, by using the spectral analysis developed for the wreath product of matrices.


\section{The map $F$}\label{section4}

In order to characterize matrices which commute with respect to the wreath product, we introduce and study in this section a map $F:\mathcal{M}_n(\mathbb{C})\times \mathcal{M}_m(\mathbb{C}) \longrightarrow \mathcal{M}_{n  m^n}(\mathbb{C})$ defined as
\begin{eqnarray*}
F(A,B) = A\wr B, \qquad \forall A\in \mathcal{M}_n(\mathbb{C}), B\in \mathcal{M}_m(\mathbb{C}).
\end{eqnarray*}
We start with the following lemma.
\begin{lemma}
The map $F$ is a homomorphism of vector spaces.
\end{lemma}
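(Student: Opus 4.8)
The plan is to verify directly that $F$ preserves both vector-space operations, working summand by summand in Definition \ref{maindefinition} and invoking only the elementary arithmetic of the Kronecker product. The point to fix at the outset is the linear structure on the domain: $\mathcal{M}_n(\mathbb{C})\times\mathcal{M}_m(\mathbb{C})$ is regarded as a $\mathbb{C}$-vector space with componentwise operations, $(A,B)+(A',B')=(A+A',B+B')$ and $h(A,B)=(hA,hB)$, so that ``homomorphism of vector spaces'' means $\mathbb{C}$-linearity of $F$.

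First I would record the facts about the Kronecker product used repeatedly: it is additive in each argument, $X\otimes(Y+Z)=X\otimes Y+X\otimes Z$ and $(X+Y)\otimes Z=X\otimes Z+Y\otimes Z$, and it is compatible with scalars, $(hX)\otimes Y=X\otimes(hY)=h(X\otimes Y)$. These extend immediately to the iterated expressions $I_m^{\otimes^n}\otimes A$ and $I_m^{\otimes^{i-1}}\otimes B\otimes I_m^{\otimes^{n-i}}\otimes C_i$, since in each of them the matrix $A$ (resp. $B$) occupies a single tensor slot, all the other slots being fixed matrices.

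Next I would check additivity. Writing $F(A,B)=I_m^{\otimes^n}\otimes A+\sum_{i=1}^n I_m^{\otimes^{i-1}}\otimes B\otimes I_m^{\otimes^{n-i}}\otimes C_i$, I substitute the pair $(A+A',B+B')$ and distribute the tensor factors over the sum $A+A'$ in the first term and over the sum $B+B'$ in the $i$-th summand; regrouping the resulting $2(n+1)$ terms yields exactly $F(A,B)+F(A',B')$. Homogeneity is analogous: substituting $(hA,hB)$ and pulling the scalar $h$ out of the relevant tensor slot of each summand via $(hX)\otimes Y=h(X\otimes Y)$, every summand acquires the common factor $h$, so $F(hA,hB)=hF(A,B)$. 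Together these two verifications establish that $F$ is $\mathbb{C}$-linear.

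There is no genuine obstacle here; the only thing worth emphasizing is not to confuse this statement with any multiplicative or separate-variable behaviour of $\wr$. The map $F$ is linear in the pair $(A,B)$ taken jointly, but, as the quasi-distributive laws above and the relation $(hI_n)\wr(-hI_m)=O_{nm^n}$ already show, $A\wr B$ is neither additive nor homogeneous in $A$ alone or in $B$ alone; the argument therefore relies essentially on scaling and adding both coordinates of the pair simultaneously, which is precisely what the vector-space structure on $\mathcal{M}_n(\mathbb{C})\times\mathcal{M}_m(\mathbb{C})$ prescribes.
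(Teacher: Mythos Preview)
Your proposal is correct and follows essentially the same approach as the paper: both verify additivity and homogeneity directly from Definition \ref{maindefinition}, using the bilinearity and scalar-compatibility of the Kronecker product to distribute over $A+A'$ and $B+B'$ and to factor out $h$. Your added remarks clarifying the componentwise vector-space structure on the domain and warning against confusing joint linearity with separate-variable linearity are sound and do not alter the argument.
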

\begin{proof}
By definition, we have
\begin{eqnarray*}
F((A,B)+(C,D)) &=& F(A+C,B+D)\\
&=& I_m^{\otimes^n} \otimes (A+C)\\
&+&\sum_{i=1}^n I_m^{\otimes^{i-1}}\otimes (B+D) \otimes I_m^{\otimes^{n-i}}\otimes C_i\\
&=&  I_m^{\otimes^n} \otimes A + \sum_{i=1}^n I_m^{\otimes^{i-1}} \otimes B \otimes I_m^{\otimes^{n-i}}\otimes C_i \\
&+& I_m^{\otimes^n} \otimes C + \sum_{i=1}^n I_m^{\otimes^{i-1}} \otimes D \otimes I_m^{\otimes^{n-i}}\otimes C_i\\
&=& A\wr B + C\wr D\\
&=& F(A,B) + F(C,D),
\end{eqnarray*}
for all $A,C \in \mathcal{M}_n(\mathbb{C})$ and  $B,D \in \mathcal{M}_m(\mathbb{C})$. Now let $h\in \mathbb{C}$. By definition:
\begin{eqnarray*}
F(h(A,B)) &=& F(hA,hB)\\
&=& I_m^{\otimes^n} \otimes (hA) + \sum_{i=1}^n I_m^{\otimes^{i-1}}\otimes (hB) \otimes I_m^{\otimes^{n-i}}\otimes C_i\\
&=&h\left(I_m^{\otimes^n} \otimes A\right) + h\sum_{i=1}^n I_m^{\otimes^{i-1}}\otimes B \otimes I_m^{\otimes^{n-i}}\otimes C_i \\
&=& h\left(I_m^{\otimes^n} \otimes A + \sum_{i=1}^n I_m^{\otimes^{i-1}} \otimes B \otimes I_m^{\otimes^{n-i}}\otimes C_i\right) \\
&=& hF(A,B),
\end{eqnarray*}
for all $A\in \mathcal{M}_n(\mathbb{C}), B\in \mathcal{M}_m(\mathbb{C})$. The lemma is proved.
\end{proof}

\begin{prop}\label{kernel}
The kernel $\ker F$ of the homomorphism $F$ is $\{(hI_n, -hI_m), h\in \mathbb{C}\}$.
\end{prop}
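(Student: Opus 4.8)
The plan is to establish the two inclusions separately. One of them comes for free: equation (\ref{grossatragedia}) says precisely that $(hI_n)\wr(-hI_m)=O_{nm^n}$ for every $h\in\mathbb{C}$, so $\{(hI_n,-hI_m):h\in\mathbb{C}\}\subseteq\ker F$. All the work is therefore in the reverse inclusion.

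Suppose $(A,B)\in\ker F$, i.e.\ $A\wr B=O_{nm^n}$, and write $B=(b_{ij})$. I would read off the structure of $A\wr B$ from Lemma \ref{lemmadani0}, viewing $A\wr B$ as an $m^n\times m^n$ array of blocks of order $n$; since the whole matrix vanishes, every one of these blocks vanishes. By part (1) of that lemma the off-diagonal blocks are exactly the matrices $b_{ij}C_k$ with $i\neq j$ and $k\in\{1,\dots,n\}$, and since $C_k\neq O_n$ this forces $b_{ij}=0$ whenever $i\neq j$; hence $B=\mathrm{diag}(b_{11},\dots,b_{mm})$ is diagonal. (If $m=1$ there are no off-diagonal blocks; in that case $A\wr B=A\wr(b_{11})=A+b_{11}I_n$ by the identity for the case $m=1$ recorded above, and one passes directly to the final step.)

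It remains to identify the diagonal entries of $B$ and the matrix $A$. By part (2) of Lemma \ref{lemmadani0} the diagonal block of $A\wr B$ indexed by the tuple $(f_1,\dots,f_n)$ with $f_h\in\{0,\dots,m-1\}$ equals $A+\sum_{h=1}^n b_{f_h+1,f_h+1}C_h$. Applying this to the tuple $(0,\dots,0)$ and using $\sum_{h=1}^n C_h=I_n$ gives $A+b_{11}I_n=O_n$, so $A=-b_{11}I_n$. Applying it to the tuple $(j-1,0,\dots,0)$ for each $j\in\{2,\dots,m\}$ gives $A+b_{11}I_n+(b_{jj}-b_{11})C_1=O_n$, and subtracting the previous relation leaves $(b_{jj}-b_{11})C_1=O_n$, hence $b_{jj}=b_{11}$. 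Therefore $B=b_{11}I_m$. Setting $h:=-b_{11}$ we obtain $A=hI_n$ and $B=-hI_m$, which is the desired inclusion $\ker F\subseteq\{(hI_n,-hI_m):h\in\mathbb{C}\}$, completing the proof.

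I do not expect a genuine obstacle here: the argument is a direct exploitation of the block description already established in Lemma \ref{lemmadani0}, and the only point needing care is the trivial degenerate case $m=1$, where the off-diagonal blocks disappear and one invokes the explicit identity $A\wr(b_{11})=A+b_{11}I_n$ instead.
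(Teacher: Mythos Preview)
Your proof is correct and follows essentially the same route as the paper's: both arguments read off the vanishing of $A\wr B$ through the block description of Lemma~\ref{lemmadani0}, first forcing the off-diagonal entries of $B$ to vanish and then using specific diagonal blocks to pin down $A$ and the diagonal of $B$. Your version is in fact slightly more economical, since you extract $A=-b_{11}I_n$ directly from the $(0,\dots,0)$ diagonal block rather than first arguing separately that $A$ must be diagonal, and you vary the first index of the tuple rather than the last to identify the remaining $b_{jj}$'s---but these are cosmetic differences, not a different strategy.
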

\begin{proof}
Recall that we have already remarked in Equation \eqref{grossatragedia} that $\{(hI_n, -hI_m), h\in \C\}\subseteq \ker F$. We start our proof by showing that $(A,B)\not\in \ker F$ whenever at least one between $A$ and $B$ is not diagonal. First, suppose that the matrix $B=(b_{ij})_{i,j=1,\ldots, m}$ is not diagonal, so that there exist two indices $i'\neq j'$ such that $b_{i'j'}\neq 0$. This implies that the matrix $b_{i'j'} I_m^{\otimes^{n-1}}\otimes C_1$ is nonzero. On the other hand, if we regard now the matrix $F(A,B)=A\wr B$ as a block matrix of order $m$ whose entries are matrices of order $nm^{n-1}$, then it is easy to check that the submatrix occupying the position $(i',j')$ is given exactly by $b_{i'j'} I_m^{\otimes^{n-1}}\otimes C_1$, and so it is nonzero. Hence, in this case, $F(A,B)\neq O_{nm^n}$.\\
Let us suppose now that the matrix $A=(a_{ij})_{i,j=1,\ldots, n}$ is not diagonal. Then there exist two indices $i'\neq j'$ such that $a_{i'j'}\neq 0$. In this case, it is useful to regard $A\wr B$ as a block matrix of order $m^{n}$ whose entries are matrices of order $n$. In such a matrix, the block corresponding to the entry $(1,1)$ is occupied by the matrix
$$
A+\sum_i^n b_{11} C_i= A+b_{11}I_n,
$$
according with Lemma \ref{lemmadani0}. As the entry $a_{i'j'}$ of $A$ is nonzero, this matrix is certainly nonzero. \\
Thus, for the remaining part of the proof, we can suppose that both $A$ and $B$ are diagonal matrices. When $F(A,B)= O_{nm^n}$, then it must be $A+b_{11}I_n=O_{n}$. This equality between matrices gives rise to the $n$ equations
$$
a_{11}+b_{11}=0, \ a_{22}+b_{11}=0,  \ldots, \ a_{nn}+b_{11}=0
$$
and so we have:
$$
h:=a_{11}=a_{22}=\cdots =a_{nn}= -b_{11}.
$$
Analogously, for each $j=1,\ldots, m$, the entry $(j,j)$ of the matrix $F(A,B)=A\wr B$ (seen as a matrix of order $m^{n}$ whose entries are matrices of order $n$) is $A+ b_{11} (I_n-C_n)+ b_{jj} C_n$, according with Lemma \ref{lemmadani0}. Since we have supposed $F(A,B)= O_{nm^n}$ then, for every $j=2,\ldots, m$, it must be:
$$
\left(
  \begin{array}{cccc}
    h+b_{11} &  &  &  \\
     & \ddots &  &  \\
     &  & h+b_{11} &  \\
     &  & & h+b_{jj} \\
  \end{array}
\right) = O_n,
$$
what implies $b_{11}=b_{22}= \cdots = b_{mm}=-h$. The claim follows.
\end{proof}

\begin{corollary}\label{corollarycommutativity}
Let $A$ and $B$ be two square matrices of order greater than $1$. Then $A\wr B = B\wr A$ if and only if $A$ and $B$ have the same order and differ by a multiple of the identity matrix.
\end{corollary}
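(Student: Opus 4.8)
The plan is to deduce everything from two facts already in hand: that $F(A,B)=A\wr B$ is a homomorphism of vector spaces, and that $\ker F=\{(hI_n,-hI_m):h\in\mathbb{C}\}$ (Proposition \ref{kernel}).

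First I would dispatch the (easy) \lq\lq if\rq\rq\ implication. Assume $A$ and $B$ both have order $n$ and $A-B=hI_n$ for some $h\in\mathbb{C}$. Then all the matrices below lie in $\mathcal{M}_{n\cdot n^n}(\mathbb{C})$, so additivity and homogeneity of $F$ give
$$
A\wr B-B\wr A=F(A,B)-F(B,A)=F(A-B,\,B-A)=F(hI_n,\,-hI_n),
$$
and the right-hand side is the zero matrix by \eqref{grossatragedia}. Hence $A\wr B=B\wr A$.

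For the \lq\lq only if\rq\rq\ implication, suppose $A\in\mathcal{M}_n(\mathbb{C})$ and $B\in\mathcal{M}_m(\mathbb{C})$ with $m,n\ge 2$ and $A\wr B=B\wr A$. The first step is to force $m=n$: since $A\wr B$ has order $nm^n$ and $B\wr A$ has order $mn^m$, equality of the two sides forces $nm^n=mn^m$, i.e. $m^{n-1}=n^{m-1}$; taking logarithms this reads $g(m)=g(n)$, where $g(x):=\frac{\ln x}{x-1}$. Since $g$ is strictly decreasing on $[2,\infty)$ — the numerator $\frac{x-1}{x}-\ln x$ of $g'(x)$ vanishes at $x=1$ and is strictly decreasing for $x>1$, hence negative there — injectivity of $g$ yields $m=n$. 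Once $m=n$, the relation $F(A,B)=F(B,A)$ together with the linearity of $F$ shows that $(A-B,\,-(A-B))\in\ker F$, whence Proposition \ref{kernel} gives $A-B=hI_n$ for some $h\in\mathbb{C}$, which is exactly the asserted conclusion.

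The one step that is not pure formalism is the elementary claim that $m^{n-1}=n^{m-1}$ has no solution with $2\le n<m$, and I expect this to be the (mild) main obstacle; the monotonicity argument above settles it, but one can argue arithmetically as well — $m$ and $n$ must have the same prime divisors, comparing exponents forces $m=d^{p},\ n=d^{q}$ with $d\ge 2$ and $\frac{m-1}{n-1}=\frac{p}{q}$, hence $q(d^{p}-1)=p(d^{q}-1)$, which is impossible for $p>q\ge 1$. It is worth noting where the hypothesis that $A$ and $B$ have order greater than $1$ enters: if, say, $A=(a)$ has order $1$, then $A\wr B=B+aI_m=B\wr A$ for every $B$, so commutativity can hold without the orders agreeing, and the statement would fail.
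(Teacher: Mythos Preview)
Your proof is correct and follows essentially the same route as the paper's: reduce to equality of orders, then use the linearity of $F$ together with Proposition \ref{kernel} to conclude $A-B=hI_n$. The only difference is that you supply a full argument for why $nm^n=mn^m$ with $m,n\ge 2$ forces $m=n$, whereas the paper simply asserts this fact.
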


\begin{proof}
First of all observe that, if we exclude the trivial case where at least one between $A$ and $B$ has order $1$, the matrices $A\wr B$ and $B\wr A$ have the same order if and only if the matrices $A$ and $B$ have the same order, let it be $n$. In this case, both $A\wr B$ and $B\wr A$ have order $n^{n+1}$. Now we have
$$
A\wr B = B\wr A \Leftrightarrow F(A,B) = F(B,A) \Leftrightarrow F(A-B,B-A) = O_{n^{n+1}}  \Leftrightarrow A-B = hI_n,
$$
for some $h\in \mathbb{C}$, according with Proposition \ref{kernel}.
\end{proof}
In other words, two matrices $A$ and $B$ commute if and only if they differ by a multiple of the identity matrix. This can also be reformulated by stating that the centralizer of a matrix $A\in \mathcal{M}_n(\mathbb{C})$ is given by $Centr(A) = \{A+hI_n, h\in \mathbb{C}\}$.

\section{Spectral computations}\label{sectionspectrum}

In this section we investigate the spectral properties of the wreath product $A\wr B$ of two matrices. We focus our attention on the special case of a circulant matrix $B$, that is, we assume:
\begin{eqnarray}\label{aggiuntaora}
B = \begin{pmatrix} b_0 & b_1 &  &  & b_{m-1} \\
 b_{m-1} & b_0 & b_1 &  &  \\
 & \ddots & \ddots & \ddots  & \\
       &  & \ddots & \ddots  & b_1 \\
 b_1 &  &  & b_{m-1} & b_0
 \end{pmatrix}, \qquad \textrm{with } b_i\in \mathbb{C}, \forall i=0,\ldots, m-1.
\end{eqnarray}
From now on, we will denote by $Circ_m(\mathbb{C})$ the vector subspace of $\mathcal{M}_m(\C)$ consisting of circulant matrices. The reader can refer to \cite{davis} as an exhaustive monograph on circulant matrices.

Before starting our spectral computations, we discuss some further properties of the map $F$ defined in Section \ref{section4}, when it is restricted to the complex vector space $M_n(\C)\times Circ_m(\C)$. Recall that a basis of the vector space $M_n(\C)$ is given by
$$
\{E_{ij}: i,j =1,\ldots,n\},
$$
where the entry $e_{hk}$ of $E_{ij}$ satisfies
$$
e_{hk}=\left\{
                                                                                                       \begin{array}{ll}
                                                                                                         1, & \hbox{if } (h,k)=(i,j)\\
                                                                                                         0, & \hbox{otherwise}
                                                                                                       \end{array}
                                                                                                     \right.
\quad  \textrm{ for } h,k=1,\ldots, n,
$$
whereas a basis of the vector space $Circ_m(\C)$ is
$$
\{Circ_i: i=0,\ldots,m-1\},
$$
where the entry $c_{hk}$ of $Circ_i$ satisfies the condition
$$
c_{hk}=\left\{
                                                                                                       \begin{array}{ll}
                                                                                                         1, & \hbox{if } k-h\equiv i \mod m \\
                                                                                                         0, & \hbox{otherwise}
                                                                                                       \end{array}
                                                                                                     \right.
\quad \textrm{ for } h,k=0,\ldots, m-1.
$$
Observe that the following identities hold:
$$
Circ_i^T = Circ_{m-i}, \qquad Circ_i Circ_j = Circ_{i+j} = Circ_j Circ_i, \qquad \forall i,j = 0,1,\ldots, m-1.
$$
Note that the sum $i+j$ must be taken modulo $m$. The basis of the vector space  $M_n(\C)\times Circ_m(\C)$ is defined in the obvious way. It follows from the definition of wreath product that
$$
F(E_{ij},O_m) = \sum_{k=0}^{m^n-1}E_{i+kn,j+kn}
$$
so that, with respect to the description given in Lemma \ref{lemmadani0}, the only nonzero blocks of order $n$ are the $m^n$ diagonal blocks of type $E_{ij}$. On the other hand, we have:
$$
F(O_n, Circ_j)  = \sum_{i=1}^n I_m^{\otimes^{i-1}} \otimes Circ_j \otimes I_m^{\otimes^{n-i}}\otimes C_i,
$$
so that in this case we only have blocks of type $C_k$, for $k=1,\ldots, n$. Observe that
$$
F\left(\sum_{i=1}^nE_{ii}, -Circ_0\right)=  F(I_n,-I_m)=  O_{nm^n},
$$
as it follows from the description of $\ker F$ given in Proposition \ref{kernel}. In particular, if we put
$$
F_A = F\left(M_n(\C)\times \{O_m\}\right) \qquad F_B = F\left(\{O_n\}\times Circ_m(\C)\right),
$$
we have $\dim (F_A \cap F_B)=1$, and more specifically $F(\sum_{i=1}^nE_{ii}, O_{m}) = F(O_n, Circ_0) = I_{nm^n}$.

The spectral properties of circulant matrices enable us to give the following result.

\begin{theorem}\label{theoremspectrummain}
Let $A\in \mathcal{M}_n(\mathbb{C})$, and let $B\in Circ_m(\mathbb{C})$  be a circulant matrix as in \eqref{aggiuntaora}. Then the spectrum $\Sigma$ of $A\wr B$ is obtained by taking the union of the spectra $\Sigma_{i_1,\ldots,i_n}$ of the $m^n$ matrices of order $n$ given by
\begin{eqnarray}\label{matrixin}
\widetilde{M}^{i_1,i_2,\ldots, i_n}=A + \sum_{t=1}^n\sum_{i=0}^{m-1}b_i\rho^{ii_t}C_t,
\end{eqnarray}
where $i_j \in \{0,1,\ldots, m-1\}$, for every $j=1,\ldots, n$, and $\rho=\exp\left(\frac{2\pi i}{m}\right)$.
\end{theorem}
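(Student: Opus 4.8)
The plan is to diagonalize the ``$B$-part'' of $A\wr B$ by a Fourier-type conjugation that only touches the $m$-dimensional tensor factors and leaves the matrices $C_t$ (which act on the last, $n$-dimensional factor) untouched; the conjugated matrix will then be block-diagonal with blocks exactly the $\widetilde{M}^{i_1,\dots,i_n}$ of \eqref{matrixin}, and the conclusion will follow because conjugation preserves the spectrum.

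First I would recall the classical fact (see \cite{davis}) that a circulant matrix $B$ as in \eqref{aggiuntaora} is diagonalized by the Fourier matrix: there is an invertible $P\in\mathcal{M}_m(\mathbb{C})$ with $P^{-1}BP=\mathrm{diag}(\lambda_0,\lambda_1,\dots,\lambda_{m-1})$, where $\lambda_j=\sum_{i=0}^{m-1}b_i\rho^{ij}$ and $\rho=\exp(2\pi i/m)$; since the index set $\{0,1,\dots,m-1\}$ is invariant under $j\mapsto -j \bmod m$, the precise labelling of the eigenvalues is irrelevant for the statement. Then I set $U=P^{\otimes^n}\otimes I_n\in\mathcal{M}_{nm^n}(\mathbb{C})$, which is invertible with $U^{-1}=(P^{-1})^{\otimes^n}\otimes I_n$.

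Next I would compute $U^{-1}(A\wr B)U$ from Definition \ref{maindefinition} using the mixed-product rule $(X_1\otimes\cdots\otimes X_k)(Y_1\otimes\cdots\otimes Y_k)=(X_1Y_1)\otimes\cdots\otimes(X_kY_k)$. The summand $I_m^{\otimes^n}\otimes A$ is fixed because $P^{-1}I_mP=I_m$; in the $t$-th summand $I_m^{\otimes^{t-1}}\otimes B\otimes I_m^{\otimes^{n-t}}\otimes C_t$ all the $I_m$ factors are unchanged, the factor $C_t$ is unchanged because $U$ is the identity on the last factor, and the single factor $B$ is replaced by $D:=P^{-1}BP=\mathrm{diag}(\lambda_0,\dots,\lambda_{m-1})$. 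Hence
$$
U^{-1}(A\wr B)U \;=\; I_m^{\otimes^n}\otimes A \;+\; \sum_{t=1}^n I_m^{\otimes^{t-1}}\otimes D\otimes I_m^{\otimes^{n-t}}\otimes C_t .
$$

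The final step is to read off the block structure of the right-hand side. For a multi-index $(j_1,\dots,j_n)\in\{0,\dots,m-1\}^n$ the diagonal matrix $I_m^{\otimes^{t-1}}\otimes D\otimes I_m^{\otimes^{n-t}}$ sends the basis vector $e_{j_1}\otimes\cdots\otimes e_{j_n}$ of $(\mathbb{C}^m)^{\otimes^n}$ to $\lambda_{j_t}\,e_{j_1}\otimes\cdots\otimes e_{j_n}$, so each of the $m^n$ subspaces $W_{j_1,\dots,j_n}:=\mathbb{C}\,(e_{j_1}\otimes\cdots\otimes e_{j_n})\otimes\mathbb{C}^n$ is invariant, these subspaces decompose the whole space $\mathbb{C}^{nm^n}$ as a direct sum, and the restriction of $U^{-1}(A\wr B)U$ to $W_{j_1,\dots,j_n}$, identified with $\mathbb{C}^n$, is exactly $A+\sum_{t=1}^n\lambda_{j_t}C_t=A+\sum_{t=1}^n\sum_{i=0}^{m-1}b_i\rho^{ij_t}C_t=\widetilde{M}^{j_1,\dots,j_n}$, in accordance with \eqref{matrixin}. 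Since conjugation by $U$ preserves the spectrum and the spectrum of a block-diagonal matrix is the union (with multiplicities) of the spectra of its diagonal blocks, the spectrum $\Sigma$ of $A\wr B$ equals $\bigcup_{i_1,\dots,i_n}\Sigma_{i_1,\dots,i_n}$, as claimed. The only point requiring a little care is the bookkeeping of tensor factors in the conjugation and the clean identification of the invariant subspaces $W_{j_1,\dots,j_n}$; once these are in place the argument is a routine computation.
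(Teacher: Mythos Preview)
Your argument is correct, and it is a genuinely different---and cleaner---route than the paper's. The paper proceeds iteratively: it observes that $A\wr B$ is a block circulant matrix with $m$ blocks of order $nm^{n-1}$, invokes Tee's result \cite{tee} on block circulant matrices to reduce the spectrum to that of the $m$ matrices $\widetilde{M}^{i_1}=\sum_{h}\rho^{hi_1}M_h$, then notes that each $\widetilde{M}^{i_1}$ is again block circulant and repeats the reduction $n$ times. Your approach compresses this iteration into a single conjugation by $U=P^{\otimes n}\otimes I_n$, which is exactly the product of the $n$ successive Fourier conjugations hidden in the paper's recursion. What you gain is transparency and brevity: no external reference is needed, the block-diagonal structure is visible at once, and the identification of the blocks with $\widetilde{M}^{i_1,\dots,i_n}$ is immediate from the tensor decomposition. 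What the paper's approach buys is a more hands-on description of the intermediate matrices $\widetilde{M}^{i_1,\dots,i_k}$, which may be useful if one wants to track the recursion explicitly (as in the proof of Lemma~\ref{lemmadani0}), but for the spectral statement itself your direct conjugation is the more efficient argument.
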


\begin{proof}
It follows from the definition of wreath product that, if $B$ is a circulant matrix, then $A\wr B$ can be regarded as a matrix of type
$$
A\wr B = \begin{pmatrix} M_0 & M_1 &  &  & M_{m-1} \\
 M_{m-1} & M_0 & M_1 &  &  \\
 & \ddots & \ddots & \ddots  & \\
  &  & \ddots & \ddots  & M_1 \\
      M_1 &  &  & M_{m-1} & M_0
    \end{pmatrix},
$$
where each block $M_i$ has order $nm^{n-1}$, for every $i=0,\ldots, m-1$. In other words, the matrix $A\wr B$ is a block circulant matrix, with blocks of order $nm^{n-1}$. More precisely, we have:
$$
M_0 = I_m^{\otimes^{n-1}} \otimes A + b_0 I_m^{\otimes^{n-1}}\otimes C_1 + \sum_{i=1}^{n-1}I_m^{\otimes^{i-1}}\otimes B \otimes I_m^{\otimes^{n-i-1}}\otimes C_{i+1}
$$
and
$$
M_i = b_i I_m^{\otimes^{n-1}}\otimes C_1, \qquad \forall i=1,\ldots, m-1.
$$
The spectral analysis developed in \cite{tee} for block circulant matrices implies that the spectrum of $A\wr B$ is given by the union of the spectra of the $m$ matrices
$$
\widetilde{M}^{i_1}=\sum_{h=0}^{m-1}\rho^{hi_1}M_h, \qquad \text{for } i_1=0,\ldots, m-1 \ \text{ and } \ \rho= \exp\left(\frac{2\pi i}{m}\right).
$$
By using the explicit expression of the matrices $M_h$'s given above, we obtain that also the matrix $\widetilde{M}^{i_1}$ is a block circulant matrix, of type
$$
\widetilde{M}^{i_1}=\begin{pmatrix}
      \widetilde{M}^{i_1}_0 & \widetilde{M}^{i_1}_1 &  &  & \widetilde{M}^{i_1}_{m-1} \\
\widetilde{M}^{i_1}_{m-1} & \widetilde{M}^{i_1}_0 & \widetilde{M}^{i_1}_1 &  &  \\
    & \ddots & \ddots & \ddots  & \\
 &  & \ddots & \ddots  & \widetilde{M}^{i_1}_1 \\
 \widetilde{M}^{i_1}_1 &  &  & \widetilde{M}^{i_1}_{m-1} & \widetilde{M}^{i_1}_0
    \end{pmatrix}
$$
where each block has order $nm^{n-2}$. In particular, we have:
\begin{eqnarray*}
\widetilde{M}^{i_1}_0 &=& I_m^{\otimes^{n-2}} \otimes A + b_0 I_m^{\otimes^{n-2}}\otimes C_2 \\
&+& \sum_{i=1}^{n-2} I_m^{\otimes^{i-1}}\otimes B \otimes I_m^{\otimes^{n-i-2}}\otimes C_{i+2}+ \sum_{i=0}^{m-1}b_i\rho^{ii_1}I_m^{\otimes^{n-2}}\otimes C_1
\end{eqnarray*}
and
$$
\widetilde{M}^{i_1}_i = b_iI_m^{\otimes^{n-2}}\otimes C_2, \qquad \forall i=1,\ldots, m-1.
$$
Now we can apply again the theory from \cite{tee}, and we deduce that the spectrum of the matrix $\widetilde{M}^{i_1}$, for each $i_1=0,\ldots, m-1$, is obtained as the union of the spectra of the $m$ matrices
$$
\widetilde{M}^{i_1,i_2}=\sum_{h=0}^{m-1}\rho^{hi_2}\widetilde{M}^{i_1}_h, \qquad \textrm{with } i_2= 0,\ldots, m-1.
$$
Observe that in the expression of $\widetilde{M}^{i_1}_h$ there is one Kronecker product less than in $M_h$.\\ Now, the matrix $\widetilde{M}^{i_1,i_2}$ is still a block circulant matrix with $m\times m$ blocks, each of order $nm^{m-3}$. By iterating this argument, we conclude that the spectrum of the matrix $A\wr B$ is obtained by taking the union of the spectra of the $m^n$ matrices of order $n$
\begin{eqnarray*}
\widetilde{M}^{i_1,i_2,\ldots, i_n}=A + \sum_{t=1}^n\sum_{i=0}^{m-1}b_i\rho^{ii_t}C_t,
\end{eqnarray*}
where the $n$-tuple $(i_1,i_2,\ldots, i_n)$ varies in $\{0,1,\ldots, m-1\}^n$. The claim follows.
\end{proof}

In the case $n=2$, we are able to give an explicit description of the spectrum of $A\wr B$.
\begin{corollary}\label{caseA2}
Let $A=\begin{pmatrix} a & b\\
 c & d
 \end{pmatrix}\in \mathcal{M}_2(\mathbb{C})$ and let $B\in Circ_m(\mathbb{C})$ be a circulant matrix as in \eqref{aggiuntaora}. Then the spectrum $\sum$ of $A\wr B$ is given by
\begin{eqnarray*}
\sum = \bigcup_{i_1,i_2=0,1, \ldots, m-1}\Sigma_{i_1,i_2},
\end{eqnarray*}
with
$$
\Sigma_{i_1,i_2} = \left\{ \frac{\sum_{i=0}^{m-1}b_i(\rho^{ii_1}+\rho^{ii_2})+a+d}{2}\pm\frac{1}{2}\sqrt{\left(\sum_{i=1}^{m-1}b_i(\rho^{ii_1}-\rho^{ii_2})+a-d\right)^2+4bc}   \right\},
$$
and $\rho=\exp\left(\frac{2\pi i}{m}\right)$.
\end{corollary}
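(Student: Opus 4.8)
The plan is simply to specialize Theorem \ref{theoremspectrummain} to $n=2$ and then diagonalize the resulting $2\times 2$ matrices by hand. For $n=2$ the matrices $C_1,C_2\in\mathcal{M}_2(\mathbb{C})$ of Definition \ref{maindefinition} are $C_1=\begin{pmatrix}1 & 0\\ 0 & 0\end{pmatrix}$ and $C_2=\begin{pmatrix}0 & 0\\ 0 & 1\end{pmatrix}$. Writing $\alpha_{i_1}=\sum_{i=0}^{m-1}b_i\rho^{ii_1}$ and $\beta_{i_2}=\sum_{i=0}^{m-1}b_i\rho^{ii_2}$ with $\rho=\exp\left(\frac{2\pi i}{m}\right)$, the matrix $\widetilde{M}^{i_1,i_2}$ of \eqref{matrixin} becomes $\begin{pmatrix} a+\alpha_{i_1} & b\\ c & d+\beta_{i_2}\end{pmatrix}$.

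Next I would apply the standard eigenvalue formula for a $2\times 2$ matrix: the eigenvalues of $\begin{pmatrix} p & b\\ c & q\end{pmatrix}$ are $\frac{p+q}{2}\pm\frac12\sqrt{(p-q)^2+4bc}$. Substituting $p=a+\alpha_{i_1}$ and $q=d+\beta_{i_2}$ gives $p+q=a+d+\sum_{i=0}^{m-1}b_i(\rho^{ii_1}+\rho^{ii_2})$ and $p-q=a-d+\sum_{i=0}^{m-1}b_i(\rho^{ii_1}-\rho^{ii_2})$. Since the $i=0$ term of the latter sum is $b_0(1-1)=0$, one has $p-q=a-d+\sum_{i=1}^{m-1}b_i(\rho^{ii_1}-\rho^{ii_2})$, which is exactly the expression appearing under the square root in $\Sigma_{i_1,i_2}$. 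This identifies the spectrum of $\widetilde{M}^{i_1,i_2}$ with the set $\Sigma_{i_1,i_2}$ stated in the corollary.

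To conclude, I would invoke Theorem \ref{theoremspectrummain} once more: the spectrum of $A\wr B$ is the union of the spectra of the $m^2$ matrices $\widetilde{M}^{i_1,i_2}$ as $(i_1,i_2)$ ranges over $\{0,1,\ldots,m-1\}^2$, hence equals $\bigcup_{i_1,i_2=0,\ldots,m-1}\Sigma_{i_1,i_2}$, as claimed.

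There is essentially no serious obstacle here, the proof being a direct specialization followed by an elementary computation. The only points deserving a moment of care are the explicit identification of $C_1,C_2$ for $n=2$ (each carrying exactly one diagonal index $i_1$, resp. $i_2$) and the observation that the constant term $b_0$ cancels in the discriminant, which is precisely why the summation under the square root starts at $i=1$ while the one in the trace part starts at $i=0$.
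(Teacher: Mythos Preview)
Your proposal is correct and follows exactly the same approach as the paper: specialize Theorem~\ref{theoremspectrummain} to $n=2$, write down the resulting $2\times 2$ matrix $\widetilde{M}^{i_1,i_2}$, and compute its eigenvalues directly. The paper's proof is in fact terser---it simply writes out $\widetilde{M}^{i_1,i_2}$ and says ``an explicit computation shows''---so your version, which spells out the $2\times 2$ eigenvalue formula and explains why the sum under the square root begins at $i=1$, is if anything more complete.
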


\begin{proof}
Let $$
A= \begin{pmatrix} a & b\\
 c & d
 \end{pmatrix} \qquad
B = \begin{pmatrix} b_0 & b_1 &  &  & b_{m-1} \\
 b_{m-1} & b_0 & b_1 &  &  \\
 & \ddots & \ddots & \ddots  & \\
       &  & \ddots & \ddots  & b_1 \\
 b_1 &  &  & b_{m-1} & b_0
 \end{pmatrix}
$$
and let $i_1,i_2\in \{0,\ldots, m-1\}$. Then the matrix \eqref{matrixin} reduces to
$$
\widetilde{M}^{i_1,i_2}= \left(
                           \begin{array}{cc}
                             a+\sum_{i=0}^{m-1}b_i\rho^{ii_1} & b \\
                             c & d+\sum_{i=0}^{m-1}b_i\rho^{ii_2} \\
                           \end{array}
                         \right).
$$
An explicit computation shows that this matrix has spectrum
$$
\Sigma_{i_1,i_2} = \left\{ \frac{\sum_{i=0}^{m-1}b_i(\rho^{ii_1}+\rho^{ii_2})+a+d}{2}\pm\frac{1}{2}\sqrt{\left(\sum_{i=1}^{m-1}b_i(\rho^{ii_1}-\rho^{ii_2})+a-d\right)^2+4bc}   \right\},
$$
and the claim follows.
\end{proof}

\begin{example}\label{exexex}\rm
Let $A= \left(
          \begin{array}{cc}
            1 & 1 \\
            0 & 2 \\
          \end{array}
        \right)$ and $B=\left(
                          \begin{array}{ccc}
                            1 & 2 & -1 \\
                            -1 & 1 & 2 \\
                            2 & -1 & 1 \\
                          \end{array}
                        \right)
$. A direct computation shows that the eigenvalues of the matrix $A\wr B$ are
$$
4;3; \frac{5}{2}\pm \frac{3\sqrt{3}}{2}i; \frac{3}{2}\pm \frac{3\sqrt{3}}{2}i,
$$
where each eigenvalue occurs with multiplicity $3$. The spectrum $\sum_{i_1,i_2}$ of the matrix $\widetilde{M}^{i_1,i_2}$, for all $i_1,i_2=0,1,2$, is described in Table \ref{finanza}.
\begin{table}\small
\begin{tabular}{|c|c|c|}
\hline
$i_1$ & $i_2$ & $\sum_{i_1,i_2}$ \\
\hline
$0$ & $0$ & $3;4$\\
\hline
$0$ & $1$ & $3;  \frac{5}{2}+ \frac{3\sqrt{3}}{2}i$\\
\hline
$0$ & $2$ & $3;  \frac{5}{2}- \frac{3\sqrt{3}}{2}i$\\
\hline
 $1$ & $0$ & $4;  \frac{3}{2}+ \frac{3\sqrt{3}}{2}i$\\
\hline
 $1$ & $1$ &  $\frac{3}{2}+ \frac{3\sqrt{3}}{2}i;  \frac{5}{2}+ \frac{3\sqrt{3}}{2}i$\\
\hline
  $1$ & $2$ &  $\frac{3}{2}+ \frac{3\sqrt{3}}{2}i;  \frac{5}{2}- \frac{3\sqrt{3}}{2}i$\\
\hline
 $2$ & $0$ &  $\frac{3}{2}- \frac{3\sqrt{3}}{2}i;  4$\\
\hline
  $2$ & $1$ &  $\frac{3}{2}- \frac{3\sqrt{3}}{2}i;  \frac{5}{2}+ \frac{3\sqrt{3}}{2}i$\\
\hline
  $2$ & $2$ &  $\frac{3}{2}- \frac{3\sqrt{3}}{2}i;  \frac{5}{2}- \frac{3\sqrt{3}}{2}i$\\
\hline
\end{tabular} \caption{Spectrum of the matrix $A\wr B$ of Example \ref{exexex}.}    \label{finanza}
\end{table}
\end{example}

\subsection{The case of a diagonal $A$ and a uniform $B$.}

Consider now the special case where $A$ is a diagonal matrix and $B$ is a multiple of the uniform matrix $J_m$, so that
$$
A = \left(
      \begin{array}{cccc}
        a_1 &  &  &  \\
         &  a_2  &  \\
         &  & \ddots &  \\
         &  &  & a_n \\
      \end{array}
    \right), \ a_i\in \mathbb{C} \qquad \text{and }  \qquad B = h\left(
                                                  \begin{array}{cccc}
                                                    1 & \cdots & \cdots  & 1 \\
                                             \vdots  & \ddots &  & \vdots \\
                                                    \vdots  &  & \ddots & \vdots \\
                                                    1 & \cdots & \cdots & 1 \\
                                                  \end{array}
                                                \right), \ h\in \mathbb{C}.
$$
The spectrum of $A\wr B$, under these hypotheses, is explicitly described in the following theorem.
\begin{theorem}\label{theoremdeterminant}
Let $A\in \mathcal{M}_n(\mathbb{C})$ be a diagonal matrix, with $k$-th diagonal entry equal to $a_k\in \mathbb{C}$, and let $B = hJ_m$, where $h\in\mathbb{C}$ and $J_m$ is the uniform matrix of order $m$. Then the spectrum of $A\wr B$ is
$$
\bigcup_{(i_1, \ldots, i_n)\in \{0,\ldots,m-1\}^n} \bigcup_{k=1}^{n}\left\{a_k+h\sum_{j=0}^{m-1}\rho^{ji_k} \right\}, \qquad \rho= \exp\left(\frac{2\pi i}{m}\right).
$$
Therefore
$$
\det(A\wr B) = \prod_{k=1}^n (a_k+mh)^{m^{n-1}}\cdot a_k^{(m-1)m^{n-1}}.
$$
\end{theorem}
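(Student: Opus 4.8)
The plan is to deduce everything from Theorem \ref{theoremspectrummain}, after first observing that $B=hJ_m$ is a circulant matrix: in the notation of \eqref{aggiuntaora} one simply has $b_i=h$ for every $i=0,\ldots,m-1$. Theorem \ref{theoremspectrummain} then tells us that the spectrum of $A\wr B$ is the union, over all $n$-tuples $(i_1,\ldots,i_n)\in\{0,\ldots,m-1\}^n$, of the spectra of the matrices $\widetilde{M}^{i_1,\ldots,i_n}=A+\sum_{t=1}^n\bigl(h\sum_{i=0}^{m-1}\rho^{ii_t}\bigr)C_t$ of order $n$, with $\rho=\exp\left(\frac{2\pi i}{m}\right)$.

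The first key step is to notice that, since $A$ is diagonal and each $C_t$ is the diagonal matrix whose only nonzero entry is a $1$ in position $(t,t)$, the matrix $\widetilde{M}^{i_1,\ldots,i_n}$ is itself diagonal, with $k$-th diagonal entry $a_k+h\sum_{j=0}^{m-1}\rho^{ji_k}$. Reading off its eigenvalues and taking the union over all tuples yields at once the claimed description of the spectrum.

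For the determinant I would first recall that the block-circulant reduction used in the proof of Theorem \ref{theoremspectrummain} is realized, step by step, by conjugation with the Fourier matrix tensored with an identity, so that $A\wr B$ is actually \emph{similar} to the block-diagonal matrix $\bigoplus_{(i_1,\ldots,i_n)}\widetilde{M}^{i_1,\ldots,i_n}$; in particular $\det(A\wr B)=\prod_{(i_1,\ldots,i_n)}\det\widetilde{M}^{i_1,\ldots,i_n}$, which since each $\widetilde{M}^{i_1,\ldots,i_n}$ is diagonal gives
\[
\det(A\wr B)=\prod_{(i_1,\ldots,i_n)\in\{0,\ldots,m-1\}^n}\ \prod_{k=1}^n\left(a_k+h\sum_{j=0}^{m-1}\rho^{ji_k}\right).
\]
The elementary fact that makes this collapse is that, $\rho$ being a primitive $m$-th root of unity, $\sum_{j=0}^{m-1}\rho^{j\ell}$ equals $m$ when $\ell\equiv 0\pmod m$ and $0$ otherwise; hence the $k$-th factor equals $a_k+mh$ if $i_k=0$ and $a_k$ if $i_k\neq 0$. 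Since that $k$-th factor depends only on the single coordinate $i_k$, and each value of $i_k\in\{0,\ldots,m-1\}$ occurs in exactly $m^{n-1}$ of the $m^n$ tuples, I would swap the two products and collect terms: the value $i_k=0$ contributes $(a_k+mh)^{m^{n-1}}$ and the $m-1$ nonzero values of $i_k$ jointly contribute $a_k^{(m-1)m^{n-1}}$. Multiplying over $k=1,\ldots,n$ produces the stated formula (and the total number of factors is $nm^n$, matching the order of $A\wr B$, as a consistency check).

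I do not expect a genuine obstacle here; the only point worth making carefully is that the reduction of Theorem \ref{theoremspectrummain} respects algebraic multiplicities, so that the determinant is really the product over \emph{all} $m^n$ tuples and not merely over the distinct eigenvalues. This is ensured by phrasing that reduction as a similarity, as above; alternatively, one could invoke the determinant discussion preceding this subsection, noting that for diagonal $A$ and uniform $B$ the matrix $DET(A\wr B,n)$ is a product of commuting diagonal blocks and computing its determinant directly, but the spectral route is shorter.
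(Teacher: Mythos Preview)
Your proof is correct and follows essentially the same approach as the paper: apply Theorem \ref{theoremspectrummain} with $b_i=h$, observe that each $\widetilde{M}^{i_1,\ldots,i_n}$ is diagonal, read off the eigenvalues, evaluate the root-of-unity sum, and count how often each value of $i_k$ occurs among the $m^n$ tuples. Your extra remark that the block-circulant reduction is a genuine similarity (so that multiplicities are preserved and the determinant really is the product over all $m^n$ tuples) is a welcome clarification that the paper leaves implicit.
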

\begin{proof}
With the notation used in Equation \eqref{matrixin}, we have in this special case
$$
\widetilde{M}^{i_1,i_2,\ldots, i_n}=\left(
      \begin{array}{cccc}
        a_1 &  &  &  \\
         &  a_2  &  \\
         &  & \ddots &  \\
         &  &  & a_n \\
      \end{array}
    \right) + h\sum_{t=1}^n\sum_{j=0}^{m-1}\rho^{ji_t}C_t,
$$
so that the matrix $\widetilde{M}^{i_1,i_2,\ldots, i_n}$ is a diagonal matrix, whose $k$-th diagonal entry is equal to $a_k+h\sum_{j=0}^{m-1}\rho^{ji_k}$, with $\rho = \exp\left(\frac{2\pi i}{m}\right)$, for each $k=1,\ldots,n$.  This implies that the spectrum of the matrix $A\wr B$ is given by
$$
\bigcup_{(i_1, \ldots, i_n)\in \{0,\ldots,m-1\}^n} \bigcup_{k=1}^{n}\left\{a_k+h\sum_{j=0}^{m-1}\rho^{ji_k} \right\}.
$$
Now, observe that
$$
a_k+h\sum_{j=0}^{m-1}\rho^{ji_k} = \left\{
                                     \begin{array}{ll}
                                       a_k+mh, & \hbox{for } i_k=0; \\
                                       a_k, & \hbox{otherwise}
                                     \end{array}
                                   \right.
$$
since $\sum_{j=0}^{m-1}\rho^{ji_k} = \frac{(\rho^{i_k})^m-1}{\rho^{i_k}-1} = 0$, for all $i_k=1,\ldots, m-1$. Therefore, we get:
\begin{eqnarray*}
\det(A\wr B) &=& \prod_{k=1}^n \ \prod_{\substack{(i_1, \ldots, i_n)\in \{0,\ldots,m-1\}^n\\ i_k=0}}(a_k+mh)\cdot \prod_{\substack{(i_1, \ldots, i_n)\in \{0,\ldots,m-1\}^n\\ i_k\neq 0}}a_k\\
&=&\prod_{k=1}^n (a_k+mh)^{m^{n-1}}\cdot a_k^{(m-1)m^{n-1}}.\nonumber
\end{eqnarray*}
\end{proof}

\begin{corollary}\label{otello}
Let $A$ and $B$ be as in Theorem \ref{theoremdeterminant}. Then the matrix $A\wr B$ is singular if and only if there exists an index $k\in \{1,\ldots,n\}$ such that $a_k=0$ or $a_k = -mh$.
\end{corollary}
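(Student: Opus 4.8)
The plan is to read the statement straight off the determinant formula established in Theorem~\ref{theoremdeterminant}. Under the stated hypotheses that theorem gives
$$
\det(A\wr B) = \prod_{k=1}^n (a_k+mh)^{m^{n-1}}\cdot a_k^{(m-1)m^{n-1}},
$$
and a product of complex numbers vanishes if and only if at least one of its factors vanishes. Assuming $m\geq 2$, both exponents $m^{n-1}$ and $(m-1)m^{n-1}$ are strictly positive, so $\det(A\wr B)=0$ exactly when $a_k+mh=0$ or $a_k=0$ for some $k\in\{1,\ldots,n\}$. Since a square matrix is singular precisely when its determinant is zero, this is the asserted criterion, and there is essentially nothing further to prove.

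Alternatively, I would argue directly from the spectrum computed in Theorem~\ref{theoremdeterminant}: the matrix $A\wr B$ is singular if and only if $0$ lies in its spectrum, that is, if and only if there is an $n$-tuple $(i_1,\ldots,i_n)\in\{0,\ldots,m-1\}^n$ and an index $k$ with
$$
a_k+h\sum_{j=0}^{m-1}\rho^{ji_k}=0 .
$$
Using the evaluation $\sum_{j=0}^{m-1}\rho^{ji_k}=m$ when $i_k=0$ and $\sum_{j=0}^{m-1}\rho^{ji_k}=0$ otherwise, which was already recorded in the proof of Theorem~\ref{theoremdeterminant}, this equation is solvable in the $i_t$'s precisely when $a_k=-mh$ (choose $i_k=0$) or $a_k=0$ (choose any $i_k\neq 0$, which is possible since $m\geq 2$). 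Either route yields the same conclusion.

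Since the argument is just an inspection of a product (or of a finite union of singletons), there is no genuine obstacle; the only point worth a word of care is the degenerate case $m=1$, where $B=(h)$ is $1\times 1$, the exponent $(m-1)m^{n-1}$ is $0$, the $a_k$-factors disappear, and the criterion collapses to $a_k=-h$ for some $k$. Under the standing convention that the circulant/uniform factor has order $m\geq 2$, this case does not occur, and the statement holds as written.
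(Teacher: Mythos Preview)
Your proposal is correct and follows exactly the route the paper intends: the corollary is stated without proof immediately after Theorem~\ref{theoremdeterminant}, as a direct consequence of the determinant formula $\det(A\wr B) = \prod_{k=1}^n (a_k+mh)^{m^{n-1}}\cdot a_k^{(m-1)m^{n-1}}$. Your alternative argument via the spectrum and your remark on the degenerate case $m=1$ are welcome additions, but the core reasoning is the same as the paper's implicit one.
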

This result will be applied in Section \ref{sylvestersection} in order to study generalized Sylvester matrix equations by means of the wreath product of matrices.

\subsection{An application to the spectral analysis of the Lamplighter random walk}\label{amilcare}

In this subsection, we investigate the spectrum of the
\lq\lq Walk or switch\rq\rq Lamplighter random walk over a
$d$-regular graph $\mathcal{G}=(V,E)$, already introduced in Section \ref{section3}. For the sake of simplicity, we assume
that the color set consists of just two elements, that can be
identified with the
vertex set of a segment graph (the color graph).

According with Corollary \ref{corollarylamp}, the transition
matrix of this stochastic process is given by
$$
\left(\frac{d}{d+1}A\right)\wr \left(\frac{1}{d+1}B\right),
$$
where $A$ is the normalized adjacency matrix of the graph
$\mathcal{G}$, and $B=\left(
                                            \begin{array}{cc}
                                              0 & 1 \\
                                              1 & 0 \\
                                            \end{array}
                                          \right)$. Observe that, in particular, the $\frac{1}{d+1}B$ is a circulant matrix, so that the spectral
                                           analysis developed in the first part of Section \ref{sectionspectrum} can be applied. In particular, we
                                           know that the spectrum of the matrix $\left(\frac{d}{d+1}A\right)\wr \left(\frac{1}{d+1}B\right)$
                                           is obtained by taking the union of the spectra of the $2^{|V|}$ matrices of order $|V|$ given by:
\begin{eqnarray}\label{dinuovo}
\widetilde{M}^{i_1,i_2,\ldots, i_{|V|}}=\frac{d}{d+1}A + \frac{1}{d+1}
\sum_{t=1}^{|V|}\sum_{i=0}^{1}b_i\rho^{ii_t}C_t,
\end{eqnarray}
where $i_t \in \{0,1\}$, for every $t=1,\ldots, |V|$, and $\rho=-1$.

Let us consider now the specific case where the graph $\mathcal{G}$ is the complete graph $K_n$ over $n$ vertices; observe that this graph is an $(n-1)$-regular graph. With respect to the notation of \eqref{dinuovo}, we have that
$$
A = \frac{1}{n-1}(J_n-I_n) \qquad    B=\left(
                                            \begin{array}{cc}
                                              0 & 1 \\
                                              1 & 0 \\
                                            \end{array}
                                          \right)
$$
where $J_n$ and $I_n$ denote, respectively, the uniform and the
identity matrix of order $n$. Note that we have $b_0=0, b_1=1$, so
that the matrix \eqref{dinuovo} reduces to
\begin{eqnarray*}
\widetilde{M}^{i_1,i_2,\ldots, i_n}=\frac{1}{n}(J_n-I_n) +
\frac{1}{n}\sum_{t=1}^n(-1)^{i_t}C_t =\frac{1}{n} \left(
                                                                                       \begin{array}{ccccc}
                                                                                         (-1)^{i_1} & 1 & 1 & \cdots & 1 \\
                                                                                        1 & (-1)^{i_2} & 1   &  & \vdots \\
                                                                                         1 & 1 & \ddots  &  & \vdots \\
                                                                                     \vdots &  &  & \ddots & 1 \\
                                                                                         1 & \cdots & \cdots  & 1 & (-1)^{i_n} \\
                                                                                       \end{array}
                                                                                     \right).
\end{eqnarray*}

It turns out that the spectrum of the matrix $\widetilde{M}^{i_1,i_2,\ldots, i_n}$ only depends on the number $k$ of indices equal to $1$ in the $n$-tuple $(i_1, i_2, \ldots, i_n)$, whereas it does not depend on the position of such indices. In order to show that, we use the elementary permutation matrix $\mathcal{E}_{ij}$ of order $n$, associated with the transposition $(ij)$ of the symmetric group of order $n$. For instance, for $n=5$, one has $\mathcal{E}_{23}= \left(
                                                                                                                            \begin{array}{ccccc}
                                                                                                                              1 & 0 & 0 & 0 & 0 \\
                                                                                                                              0 & 0 & 1 & 0 & 0 \\
                                                                                                                              0 & 1 & 0 & 0 & 0 \\
                                                                                                                              0 & 0 & 0 & 1 & 0 \\
                                                                                                                              0 & 0 & 0 & 0 & 1 \\
                                                                                                                            \end{array}
                                                                                                                          \right)$. Observe that $\mathcal{E}_{ij}^{-1}=\mathcal{E}_{ij}$, for all $ i,j = 1,\ldots, n$. Let $i_{r_1}, i_{r_2}, \ldots, i_{r_k}$ be the entries of the $n$-tuple $(i_1, i_2, \ldots, i_n)$ which are equal to $1$; similarly, let $j_{s_1}, j_{s_2}, \ldots, j_{s_k}$ be the entries of the $n$-tuple $(j_1, j_2, \ldots, j_n)$ which are equal to $1$. Let $\widetilde{M}^{i_1,i_2,\ldots, i_n}$  and $\widetilde{M}^{j_1,j_2,\ldots, j_n}$ be the corresponding matrices. Then one has
\begin{eqnarray*}
\widetilde{M}^{j_1,j_2,\ldots, j_n} &= & \mathcal{E}_{i_{r_k}j_{s_k}}\cdots \mathcal{E}_{i_{r_1}j_{s_1}}\widetilde{M}^{i_1,i_2,\ldots, i_n}\mathcal{E}_{i_{r_1}j_{s_1}}\cdots \mathcal{E}_{i_{r_k}j_{s_k}} \\
&=&  (\mathcal{E}_{i_{r_1}j_{s_1}}\cdots \mathcal{E}_{i_{r_k}j_{s_k}})^{-1}  \widetilde{M}^{i_1,i_2,\ldots, i_n}\mathcal{E}_{i_{r_1}j_{s_1}}\cdots \mathcal{E}_{i_{r_k}j_{s_k}}.
\end{eqnarray*}
Therefore, the matrices $\widetilde{M}^{i_1,i_2,\ldots, i_n}$  and $\widetilde{M}^{j_1,j_2,\ldots, j_n}$ are conjugate, so that they have the same characteristic polynomial, and so they have the same spectrum.

As a consequence, for each $k=0,1,\ldots, n$, we can reduce to
investigate the spectrum of the matrix $\widetilde{M}^{1,\ldots,
1,0,\ldots, 0}$, corresponding to the $n$-tuple starting with $k$
entries equal to $1$, whose remaining $n-k$ entries are equal to
$0$. Put $\widetilde{M}^{1,\ldots, 1,0,\ldots, 0}=\frac{1}{n}P$,
with
$$
P = J_n + Q,
$$
where $Q$ is the diagonal matrix $Q = \left(
                                        \begin{array}{cccccc}
                                          -2 &  &  &  &  &  \\
                                           & \ddots &  &  &  &  \\
                                           &  & -2&  &  &  \\
                                           &  &  & 0&  &  \\
                                           &  & &  & \ddots &  \\
                                           &  &  &  &  & 0 \\
                                        \end{array}
                                      \right)
$. Now we have:
\begin{eqnarray*}
\det (\lambda I_n - P) & = & \det \left( \lambda I_n -J_n-Q \right) \\
&=& \det \left( (\lambda I_n - Q) \left(I_n-(\lambda I_n-Q)^{-1}J_n \right)\right)\\
&=& \det(\lambda I_n -Q)\cdot \det \left( I_n-(\lambda
I_n-Q)^{-1}J_n \right).
\end{eqnarray*}
It is clear that
$$
\det(\lambda I_n -Q) = (\lambda+2)^k\cdot \lambda^{n-k}.
$$
Now it can be seen that the matrix $(\lambda I_n-Q)^{-1}J_n$ is
the matrix of rank $1$, whose first $k$ rows are
constant, with entries all equal to $\frac{1}{\lambda+2}$, whereas
the remaining $n-k$ rows are constant, and with entries all equal
to $\frac{1}{\lambda}$. Therefore, $(\lambda I_n-Q)^{-1}J_n$ has $n-1$ eigenvalues equal to $0$, and one eigenvalue equal to $\frac{k}{\lambda+2}+\frac{n-k}{\lambda}$. This implies that the matrix $ I_n-(\lambda I_n-Q)^{-1}J_n$ has $n-1$ eigenvalues equal to $1$, and one eigenvalue equal to $1-\frac{k}{\lambda+2}+\frac{n-k}{\lambda}$, so that:
$$
\det \left( I_n-(\lambda I_n-Q)^{-1}J_n \right) = 1 - \frac{k}{\lambda+2} -\frac{n-k}{\lambda}.
$$
Then, by gluing together all the previous computations, we obtain:
\begin{eqnarray}\label{stark}
\det(\lambda I_n -P) = (\lambda+2)^{k-1}\cdot \lambda^{n-k-1}\cdot
(\lambda^2+(2-n)\lambda+2k-2n).
\end{eqnarray}
For the particular value $k=0$, we get:
\begin{eqnarray}\label{star0}
\det(\lambda I_n -P) = \lambda^{n-1}\cdot (\lambda-n);
\end{eqnarray}
similarly, for the special value $k=n$, we have:
\begin{eqnarray}\label{starn}
\det(\lambda I_n -P) = (\lambda+2)^{n-1}\cdot (\lambda+2-n).
\end{eqnarray}
We deduce the following theorem.
\begin{theorem}\label{spectrumlamp}
The spectrum of the transition matrix of the \lq\lq Walk
or switch\rq\rq Lamplighter random walk over the complete graph
$K_n$ is given by $\Sigma = \bigcup_{k=0}^n \binom{n}{k}\Sigma_k$,
where
$$
\Sigma_0 = \{0\ \textrm{ (multiplicity $n-1$)};\ 1\} \qquad
\Sigma_n = \left\{-\frac{2}{n}\ \textrm{ (multiplicity $n-1$)};\
\frac{n-2}{n}\right\}
$$
and, for each $k=1,\ldots, n-1$:
$$
\Sigma_k = \left\{-\frac{2}{n}\ \textrm{ (multiplicity $k-1$)}; \
0\ \textrm{ (multiplicity $n-k-1$)}; \
\frac{n-2\pm\sqrt{(n+2)^2-8k}}{2n} \right\}.$$
\end{theorem}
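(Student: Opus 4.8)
The plan is to read the theorem off directly from the computations already carried out before the statement, after organizing the $2^{n}$ matrices $\widetilde{M}^{i_1,\ldots,i_n}$ of \eqref{dinuovo} by the combinatorial type of their defining tuple. First I would invoke Theorem \ref{theoremspectrummain} to write the spectrum $\Sigma$ of the transition matrix $\left(\frac{n-1}{n}A\right)\wr\left(\frac1nB\right)$ as the union, counted with multiplicity, of the spectra $\Sigma_{i_1,\ldots,i_n}$ of these $n\times n$ matrices, as $(i_1,\ldots,i_n)$ ranges over $\{0,1\}^n$. Then, using the conjugation identity established just above the statement — namely that $\widetilde{M}^{j_1,\ldots,j_n}$ and $\widetilde{M}^{i_1,\ldots,i_n}$ are conjugate by a product of elementary permutation matrices whenever the two tuples have the same number $k$ of entries equal to $1$ — I conclude that $\Sigma_{i_1,\ldots,i_n}$ depends only on $k$, so it may be written $\Sigma_k$. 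Since exactly $\binom{n}{k}$ tuples in $\{0,1\}^n$ have $k$ entries equal to $1$, this yields $\Sigma=\bigcup_{k=0}^n\binom{n}{k}\Sigma_k$, where the symbol $\binom{n}{k}\Sigma_k$ records that the multiset $\Sigma_k$ is taken $\binom{n}{k}$ times.

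The remaining step is to identify each $\Sigma_k$ explicitly. Recall that $\widetilde{M}^{1,\ldots,1,0,\ldots,0}=\tfrac1n P$ with $P=J_n+Q$, so $\Sigma_k=\tfrac1n\cdot\mathrm{spec}(P)$, and $\mathrm{spec}(P)$ is read off the factored characteristic polynomials \eqref{stark}, \eqref{star0}, \eqref{starn}. For $1\le k\le n-1$, formula \eqref{stark} gives the eigenvalue $-2$ with multiplicity $k-1$, the eigenvalue $0$ with multiplicity $n-k-1$, and the two roots of $\lambda^2+(2-n)\lambda+2k-2n$; dividing by $n$ produces $-\tfrac2n$ (multiplicity $k-1$), $0$ (multiplicity $n-k-1$), and $\tfrac1n\cdot\tfrac{(n-2)\pm\sqrt{(n-2)^2-4(2k-2n)}}{2}$. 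The only genuine computation is the discriminant simplification $(n-2)^2-4(2k-2n)=n^2+4n+4-8k=(n+2)^2-8k$, which turns these last two into $\tfrac{n-2\pm\sqrt{(n+2)^2-8k}}{2n}$, exactly as in the statement. For $k=0$ and $k=n$ I would either appeal directly to \eqref{star0} and \eqref{starn}, or note that in \eqref{stark} the quadratic factor degenerates, splitting as $(\lambda-n)(\lambda+2)$ when $k=0$ and as $\lambda(\lambda+2-n)$ when $k=n$, so that after dividing by $n$ one gets $\Sigma_0=\{0\ (\text{mult. }n-1);\,1\}$ and $\Sigma_n=\{-\tfrac2n\ (\text{mult. }n-1);\,\tfrac{n-2}{n}\}$.

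I do not expect a serious obstacle, since the entire spectral analysis has already been performed before the statement; the proof is essentially a repackaging of \eqref{stark}--\eqref{starn} together with the conjugation-invariance observation. The only points requiring care are bookkeeping ones: checking that the multiplicities sum to $n$ in each $\Sigma_k$ (indeed $(k-1)+(n-k-1)+2=n$ for $1\le k\le n-1$, and $(n-1)+1=n$ in the two extremal cases), verifying that the extremal formulas \eqref{star0} and \eqref{starn} are consistent with the specialization of \eqref{stark} at $k=0$ and $k=n$, and performing the discriminant simplification correctly. A concluding sentence would then assemble $\Sigma=\bigcup_{k=0}^n\binom{n}{k}\Sigma_k$ from the $\binom{n}{k}$-fold copies of each $\Sigma_k$, finishing the proof.
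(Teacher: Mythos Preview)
Your proposal is correct and follows essentially the same approach as the paper: the paper's own proof also simply reads off $\Sigma_k$ by dividing the roots of \eqref{stark}, \eqref{star0}, \eqref{starn} by $n$ and invoking the $\binom{n}{k}$-fold multiplicity coming from the conjugation argument. Your write-up is in fact more detailed than the paper's (you spell out the discriminant simplification and the multiplicity bookkeeping), but the underlying idea is identical.
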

\begin{proof}
Observe that $\Sigma$ is the spectrum of the matrix $\widetilde{M}^{1,\ldots, 1,0,\ldots, 0}=\frac{1}{n}P$. Therefore, it suffices to normalize by $n$ the roots of the characteristic polynomial \eqref{stark}, for $1\leq k<n$, taken with multiplicity $\binom{n}{k}$; of the characteristic polynomial \eqref{star0} for $k=0$; and of the characteristic polynomial \eqref{starn} for $k=n$.
\end{proof}

\begin{remark}\rm
Notice that in the previous computations we had $B=Circ_1\in \mathcal{M}_{2}(\C)$. In principle (with an amount of computational difficulties) analogous computations can be performed for any circulant matrix $B= \sum_{i=1}^{m-1}t_iCirc_i\in \mathcal{M}_m(\C)$, with $t_i\in \{0,1\}$, provided that such a linear combination gives a symmetric matrix, which can be regarded as the adjacency matrix of some (circulant) color graph. In this case, the total degree of the corresponding wreath product of graphs would be $n-1+|\{ i \textrm{ s.t. } t_i =1\}|$.
\end{remark}

\section{Application to generalized Sylvester matrix equations}\label{sylvestersection}

In this section, we address the question whether it is possible to find conditions to guarantee that the matrix $A\wr B$ has full rank. This problem is related to the solution of the so-called Sylvester matrix equations. J. J. Sylvester studied the equation $AX+XB=C$, where $A\in  \mathcal{M}_n(\mathbb{C})$, $B\in  \mathcal{M}_m(\mathbb{C})$, $C\in  \mathcal{M}_{n\times m}(\mathbb{C})$ are given, and one wants to determine the matrix $X\in \mathcal{M}_{n\times m}(\mathbb{C})$. It is well known that the solution of this equation exists and is unique if and only if $A$ and $-B$ have no common eigenvalue \cite{sylv}.
Actually, Sylvester studied also the more general equation
\begin{equation}\label{sylvester}
\sum_{i=1}^k A_iXB_i=C,
\end{equation}
with $A_i\in  \mathcal{M}_{n\times m}(\mathbb{C})$, $B_i\in  \mathcal{M}_{s\times t}(\mathbb{C})$, and $C\in \mathcal{M}_{n\times t}(\mathbb{C})$. Many methods were developed to solve this kind of equations (see, for instance \cite{ding}).
Here, we are interested in finding a sufficient condition to guarantee that the Equation \eqref{sylvester} admits a unique solution. The connection between the solution of the Equation \eqref{sylvester} and the wreath product of matrices is given by the following lemma. Given an $m\times n$ matrix $M$, we denote by $\VVec(M)$ the column vector of length $mn$ obtained by stacking the columns of $M$ on top of one another.

\begin{lemma}\label{lemma_sylvester}
Let $A_i\in  \mathcal{M}_{n\times m}(\mathbb{C})$, $B_i\in  \mathcal{M}_{s\times t}(\mathbb{C})$, and $C\in  \mathcal{M}_{n\times t}(\mathbb{C})$. The equation $\sum_{i=1}^k A_iXB_i=C$ can be rewritten in the form
$$
(B_1^T \otimes A_1+\cdots +B_k^T \otimes A_k){\rm Vec}(X)={\rm Vec}(C).
$$
\end{lemma}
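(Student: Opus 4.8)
The plan is to verify the claimed identity by computing both sides entrywise, using the standard column-stacking conventions. The key fact I would invoke is the classical Kronecker-product identity $\mathrm{Vec}(PYQ) = (Q^T \otimes P)\mathrm{Vec}(Y)$, valid for any matrices $P, Y, Q$ of compatible sizes. Granting this, the argument is short: apply $\mathrm{Vec}$ to both sides of $\sum_{i=1}^k A_i X B_i = C$; by linearity of $\mathrm{Vec}$ the left-hand side becomes $\sum_{i=1}^k \mathrm{Vec}(A_i X B_i)$, and each summand equals $(B_i^T \otimes A_i)\mathrm{Vec}(X)$ by the identity above. Collecting the sum as a single matrix acting on $\mathrm{Vec}(X)$ gives exactly $(B_1^T \otimes A_1 + \cdots + B_k^T \otimes A_k)\mathrm{Vec}(X) = \mathrm{Vec}(C)$, which is the assertion.

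First I would establish the basic identity $\mathrm{Vec}(PYQ) = (Q^T\otimes P)\mathrm{Vec}(Y)$ explicitly, since the whole lemma rests on it. Writing $Y$ with columns $y_1, \ldots$, the $j$-th column of $PYQ$ is $P\big(\sum_\ell q_{\ell j} y_\ell\big) = \sum_\ell q_{\ell j}(P y_\ell)$; stacking these columns and comparing with the block structure of $Q^T\otimes P$ — whose $(j,\ell)$ block is $q_{\ell j}P$ — yields the claim after matching indices. This is a routine but slightly fiddly index computation, and it is the step most prone to a convention slip (for instance, one must be careful that it is $Q^T$ and not $Q$ that appears, which is precisely why $B_i^T$ rather than $B_i$ shows up in the statement). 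One should also double-check the dimensions: $A_i\in\mathcal{M}_{n\times m}$, $X\in\mathcal{M}_{m\times s}$ (forced by the product $A_i X B_i$ being defined with $B_i\in\mathcal{M}_{s\times t}$), so $A_i X B_i\in\mathcal{M}_{n\times t}$ matching $C$; then $B_i^T\otimes A_i\in\mathcal{M}_{(tn)\times(sm)}$, $\mathrm{Vec}(X)$ has length $sm$, and $\mathrm{Vec}(C)$ has length $tn$, so everything is consistent.

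I expect no genuine obstacle here: the content is the Kronecker-vectorization identity, which is classical, and the rest is linearity of $\mathrm{Vec}$ together with assembling a sum of matrices into one matrix. The only thing requiring care is bookkeeping of transposes and the stacking order (columns versus rows), so in the write-up I would state the convention for $\mathrm{Vec}$ precisely — stacking columns, as already fixed in the excerpt — and then perform the index check once, cleanly. After that, the lemma follows in two lines.
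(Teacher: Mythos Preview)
Your proposal is correct and follows essentially the same route as the paper: both reduce the lemma to the Kronecker--vectorization identity $\mathrm{Vec}(PYQ)=(Q^T\otimes P)\mathrm{Vec}(Y)$ and then invoke linearity of $\mathrm{Vec}$ to pass from a single summand to the full sum. The only cosmetic difference is that the paper verifies the identity by a direct entrywise index computation (writing out $c_{ij}$ and matching it to the $((j-1)n+i)$-th entry of $(B_1^T\otimes A_1)\mathrm{Vec}(X)$), whereas you sketch the equivalent column-stacking argument; both are standard and equally valid.
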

\begin{proof}
Let us start by considering the case $k=1$, so that we look at the equation $A_1XB_1=C$. We denote by $\{a_{ij}\}_{i=1,\ldots,n; j=1,\ldots,m}$, by $\{b_{ij}\}_{i=1,\ldots,s; j=1,\ldots,t}$, by $\{c_{ij}\}_{i=1,\ldots,n; j=1,\ldots,t}$ and by $\{x_{ij}\}_{i=1,\ldots,m; j=1,\ldots,s}$ the entries of the matrices $A_1, B_1, C, X$, respectively.

Now the entry $c_{ij}$ of $C$ equals $\sum_{h=1}^sy_{ih}b_{hj}$, where $y_{ih}$ is the entry $(i,h)$ of $A_1X$. Therefore:
\begin{eqnarray}\label{lillo}
c_{ij} &=& \sum_{h=1}^s\left(\sum_{\ell=1}^m a_{i\ell}x_{\ell h}\right)b_{hj} \nonumber\\
&=& \sum_{\ell=1}^m\left(\sum_{h=1}^s \widetilde{b_{jh}}a_{i\ell}\right)x_{\ell h},
\end{eqnarray}
where $\widetilde{b_{jh}}$ denotes the entry $(j,h)$ of $B_1^T$. Notice also that $c_{ij}$ occupies the $((j-1)n+i)$-th entry of the column vector $\VVec(C)$, by definition.
On the other hand, \eqref{lillo} is exactly the entry $(j-1)n+i$ of the column vector $(B_1^T\otimes A_1)\VVec (X)$, as it can be regarded as the sum of the componentwise products of the elements of the $((j-1)n+i)$-th row of $B_1^T\otimes A_1$, with the column vector $\VVec(X)$. This concludes the proof for the case $k=1$.

Now for every $k>1$ consider the equation $\sum_{i=1}^k A_iXB_i=C$. By proceeding as before, we are led to the equation
\begin{eqnarray}\label{cappacappa}
c_{ij} = \sum_{\ell=1}^m\left(\sum_{h=1}^s \left(\widetilde{b_{jh}}a_{i\ell}+ \widetilde{b_{jh}'}a_{i\ell}'+ \cdots +  \widetilde{b_{jh}^{(k-1)}}a_{i\ell}^{(k-1)}\right) \right)x_{\ell h},
\end{eqnarray}
where $a_{i\ell}^{(p)}$ denotes the entry $(i,\ell)$ of $A_{p+1}$ and $\widetilde{b_{jh}^{(p)}}$ the entry $(j,h)$ of $B_{p+1}^T$, for each $p=0,\ldots, k-1$. On the other hand, the right-hand side of \eqref{cappacappa} is exactly the entry $(j-1)n+i$ of the column vector $(B_1^T\otimes A_1+\cdots +B_k^T\otimes A_k)\VVec (X)$, as it can be regarded as the sum of the componentwise products of the elements of the $((j-1)n+i)$-th row of $B_1^T\otimes A_1+\cdots +B_k^T\otimes A_k$, with the column vector $\VVec(X)$. This concludes the proof.
\end{proof}

The previous result leads to the following proposition.
\begin{prop}\label{prop_sistema}
Let $A\in \mathcal{M}_n(\C)$, $B\in \mathcal{M}_m(\C)$, and let $C_i$ be as in Definition \ref{maindefinition}, for each $i=1,\ldots, n$. Put
$$
A_1=A, \quad A_2=C_1,\quad \ldots\ldots,\quad A_{n+1}=C_n
$$
and
$$
B_1=I_m^{\otimes^n}, \quad B_2= \left( B \otimes I_m^{\otimes^{n-1}}\right)^T, \ldots, \quad B_h= \left( I_m^{\otimes^{h-2}}\otimes B \otimes I_m^{\otimes^{n-h+1}}\right)^T, \ldots
$$
$$
\ldots, \quad B_{n+1}= \left( I_m^{\otimes^{n-1}}\otimes B\right)^T.
$$
Then the equation $\sum_{i=1}^{n+1} A_iXB_i=C$ has a unique solution if and only if $A\wr B$ has full rank.
\end{prop}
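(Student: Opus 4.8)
The plan is to combine Lemma \ref{lemma_sylvester} with the elementary fact that a square linear system is uniquely solvable exactly when its coefficient matrix is invertible, i.e. has full rank. \emph{Step 1 (vectorize).} I would first apply Lemma \ref{lemma_sylvester} with $k=n+1$ to the equation $\sum_{i=1}^{n+1}A_iXB_i=C$. Here $A_i\in\mathcal{M}_n(\mathbb{C})$ and $B_i\in\mathcal{M}_{m^n}(\mathbb{C})$, so (in the notation of the lemma, with its generic parameters specialized to $n,n,m^n,m^n$) the unknown lies in $\mathcal{M}_{n\times m^n}(\mathbb{C})$ and $\VVec(X),\VVec(C)\in\mathbb{C}^{nm^n}$. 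The lemma turns the matrix equation into the linear system
$$
\bigl(B_1^T\otimes A_1+\cdots+B_{n+1}^T\otimes A_{n+1}\bigr)\VVec(X)=\VVec(C),
$$
whose coefficient matrix is square of order $nm^n$.

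\emph{Step 2 (identify the coefficient matrix with $A\wr B$).} Since transposition is an involution, the prescribed definitions of the $B_h$'s give $B_1^T=I_m^{\otimes^n}$ and, for $h=2,\dots,n+1$, $B_h^T=I_m^{\otimes^{h-2}}\otimes B\otimes I_m^{\otimes^{n-h+1}}$. Using $A_1=A$ and $A_h=C_{h-1}$, and reindexing by $i=h-1$, this yields
$$
\sum_{i=1}^{n+1}B_i^T\otimes A_i = I_m^{\otimes^n}\otimes A+\sum_{i=1}^{n}I_m^{\otimes^{i-1}}\otimes B\otimes I_m^{\otimes^{n-i}}\otimes C_i = A\wr B,
$$
the last equality being precisely Definition \ref{maindefinition}. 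The only thing requiring care here is the bookkeeping of the Kronecker factors; checking the boundary cases $h=2$ and $h=n+1$ against the explicitly listed matrices $B_2$ and $B_{n+1}$ confirms that the general formula reproduces the summands of $A\wr B$.

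\emph{Step 3 (conclude).} The system $(A\wr B)\,\VVec(X)=\VVec(C)$ has a unique solution if and only if the square matrix $A\wr B$ is invertible: if $A\wr B$ has full rank this is immediate, whereas if $A\wr B$ is singular then for any $C$ the system has either no solution or infinitely many, so never exactly one. Because $\VVec$ is a linear bijection $\mathcal{M}_{n\times m^n}(\mathbb{C})\to\mathbb{C}^{nm^n}$, uniqueness of $\VVec(X)$ is equivalent to uniqueness of $X$, and the statement follows. I do not expect a genuine obstacle; the one conceptual point worth stressing is that $A\wr B$ is \emph{square}, so that injectivity (uniqueness of solutions) and surjectivity (existence of solutions) of the associated linear map both coincide with having full rank — which is exactly what makes ``unique solution'' and ``full rank'' equivalent here.
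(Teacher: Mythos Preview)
Your proof is correct and follows exactly the same route as the paper's own argument: apply Lemma \ref{lemma_sylvester}, identify the resulting coefficient matrix with $A\wr B$ via Definition \ref{maindefinition}, and invoke the standard fact that a square linear system has a unique solution precisely when its coefficient matrix has full rank. Your write-up is more explicit about the reindexing and the role of squareness, but the substance is identical.
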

\begin{proof}
The statement follows from Lemma \ref{lemma_sylvester} by observing that the matrix multiplying $\VVec(X)$ coincides with $A\wr B$. Then the linear system $(A\wr B)\VVec(X)=\VVec(C)$ has a unique solution if and only if the matrix $A\wr B$ has full rank.
\end{proof}

It follows that, if $\det(A\wr B)\neq 0$, then the solution of the corresponding matrix equation given in Proposition \ref{prop_sistema} is unique. In particular, the following corollary gives a sufficient condition for the uniqueness of the solution of a special type of Sylvester matrix equations.

\begin{corollary}\label{uniqueness}
Let $A$ and $B$ be as in Theorem \ref{theoremdeterminant}, and suppose  that there exists no index $k\in \{1,\ldots,n\}$ such that $a_k=0$ or $a_k = -mh$. Let $A_1,\ldots, A_{n+1}$ and $B_1,\ldots, B_{n+1}$ be defined as in Proposition \ref{prop_sistema}. Then the equation
$$
 \sum_{i=1}^{n+1} A_iXB_i=C
$$
admits a unique solution $X$.
\end{corollary}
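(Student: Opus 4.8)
The plan is to combine two facts already established above: Proposition~\ref{prop_sistema}, which recasts the uniqueness question as a full-rank condition on $A\wr B$, and the determinant formula of Theorem~\ref{theoremdeterminant} (equivalently Corollary~\ref{otello}), which says exactly when $A\wr B$ is singular. So the argument is essentially a two-line chain, and the work is to state it cleanly.

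First I would recall that, with the matrices $A_1,\ldots,A_{n+1}$ and $B_1,\ldots,B_{n+1}$ chosen as in Proposition~\ref{prop_sistema}, Lemma~\ref{lemma_sylvester} rewrites the equation $\sum_{i=1}^{n+1}A_iXB_i=C$ as the linear system $(A\wr B)\VVec(X)=\VVec(C)$ whose coefficient matrix is precisely $A\wr B$, a square matrix of order $nm^n$. Hence this system has a unique solution $\VVec(X)$ for the given $\VVec(C)$ if and only if $A\wr B$ is invertible, i.e. $\det(A\wr B)\neq 0$; unstacking the columns then recovers the unique matrix $X$.

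It remains to check that the hypothesis forces $\det(A\wr B)\neq 0$. Since $A$ is diagonal with entries $a_1,\ldots,a_n$ and $B=hJ_m$, Theorem~\ref{theoremdeterminant} gives
$$
\det(A\wr B)=\prod_{k=1}^n (a_k+mh)^{m^{n-1}}\cdot a_k^{(m-1)m^{n-1}}.
$$
The assumption that no index $k\in\{1,\ldots,n\}$ satisfies $a_k=0$ or $a_k=-mh$ says precisely that every factor $a_k$ and every factor $a_k+mh$ is nonzero, so the whole product is nonzero; equivalently, this is the contrapositive of Corollary~\ref{otello}. Together with the previous paragraph this yields the unique solvability of $\sum_{i=1}^{n+1}A_iXB_i=C$.

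I do not expect a genuine obstacle: the statement is a direct specialization obtained by feeding the determinant computation of Section~\ref{sectionspectrum} into the reformulation of Section~\ref{sylvestersection}. The only point worth a line of care is verifying that the matrices $A_i,B_i$ named in the corollary coincide verbatim with those of Proposition~\ref{prop_sistema}, so that the coefficient matrix of the associated linear system is exactly $A\wr B$ and the equivalence ``unique solution $\Leftrightarrow$ full rank'' applies without modification.
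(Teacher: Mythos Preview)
Your proposal is correct and follows exactly the same approach as the paper: invoke Proposition~\ref{prop_sistema} to reduce uniqueness to the invertibility of $A\wr B$, then apply the determinant formula of Theorem~\ref{theoremdeterminant} (equivalently Corollary~\ref{otello}) to conclude. The paper's own proof is in fact a single line citing Corollary~\ref{otello}; your version just unpacks the chain more explicitly.
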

\begin{proof}
It directly follows from Corollary \ref{otello}.
\end{proof}

\section*{Acknowledgments}
We want to express our deepest gratitude to Fabio Scarabotti for enlightening discussions and for his continuous encouragement.


\begin{thebibliography}{99}
\bibitem{bar} L. Bartholdi, W. Woess, Spectral computations on lamplighter groups and Diestel-Leader graphs, {\it
J. Fourier Anal. Appl.} {\bf 11} (2005), no. 2, 175--202. \texttt{doi:10.1007/s00041-005-3079-0}

\bibitem{sylv} G. Birkhoff, S. Mac Lane, A survey of modern algebra, 3rd Edition. {\it The Macmillan Co., New York; Collier-Macmillan Ltd., London}. 1965. x+ 437 pp.

\bibitem{aap} D. D'Angeli, A. Donno, Crested products of Markov chains, {\it Ann. Appl. Probab.} {\bf 19} (2009), no. 1, 414--453. \texttt{doi:10.1214/08-AAP546}

\bibitem{orthogonal} D. D'Angeli, A. Donno, Markov chains on orthogonal block structures, {\it European J. Combin.} {\bf 31} (2010), no. 1, 34--46. \texttt{doi:10.1016/j.ejc.2009.02.003}

\bibitem{generalized} D. D'Angeli, A. Donno, Generalized crested products of Markov chains, {\it European J. Combin.} {\bf 32} (2011), no. 2, 243--257.     \texttt{doi:10.1016/j.ejc.2010.09.007}

\bibitem{aam} D. D'Angeli, A. Donno, The lumpability property for a family of Markov chains on poset block structures, {\it Adv. in Appl. Math.} {\bf 51} (2013), no. 3, 367--391. \texttt{doi:10.1016/j.aam.2013.04.007}

\bibitem{davis} P. J. Davis, Circulant matrices. A Wiley-Interscience
Publication. {\it Pure and Applied Mathematics}. John Wiley \&
Sons, New York-Chichester-Brisbane, 1979. xv + 250 pp.

\bibitem{ding} F. Ding, P. X. Liu, J. Ding, Iterative solutions of the generalized Sylvester matrix equations by using the hierarchical
identification principle, {\it Appl. Math. Comput.} {\bf 197} (2008), no.1, 41--50.  \texttt{doi:10.1016/j.amc.2007.07.040}

\bibitem{ijgt} A. Donno, Replacement and zig-zag products, Cayley graphs and Lamplighter random walk, {\it Int. J. Group Theory} {\bf 2} (2013), no. 1, 11--35.

\bibitem{gc} A. Donno, Generalized wreath products of graphs and groups, {\it Graphs Combin.} {\bf 31} (2015), no. 4, 915--926. \texttt{doi:10.1007/s00373-014-1414-4}

\bibitem{erschler} A. Erschler, Generalized wreath products, {\it Int. Math. Res. Notices} 2006, Art. ID 57835, 14 pp. \texttt{doi:10.1155/IMRN/2006/57835}

\bibitem{imrichbook} R. Hammack, W. Imrich, S. Klav\v{z}ar, Handbook of product graphs. Second edition.
{\it Discrete Mathematics and its Applications (Boca Raton)}. CRC
Press, Boca Raton, FL (2011).

\bibitem{imrich} W. Imrich, H. Izbicki, Associative Products of
Graphs, {\it Monatsh. Math.} {\bf 80} (1975), no. 4, 277--281.  \texttt{doi:10.1007/BF01472575}

\bibitem{sabidussi} G. Sabidussi, The composition of graphs, {\it Duke Math.
J.} {\bf 26}, 693--696 (1959).

\bibitem{sabidussi2} G. Sabidussi, Graph multiplication, {\it Math.
Z.} {\bf 72} (1959/1960), 446--457.  \texttt{doi:10.1007/BF01162967}

\bibitem{sca1} F. Scarabotti, F. Tolli, Harmonic
Analysis of finite lamplighter random walks, {\it J. Dyn. Control
Syst.} {\bf 14} (2008), no. 2, 251--282.   \texttt{doi:10.1007/s10883-008-9038-8}

\bibitem{sca2} F. Scarabotti, F. Tolli, Harmonic analysis on a finite
homogeneous space, {\it Proc. Lond. Math. Soc. (3)} {\bf 100}
(2010), no. 2, 348--376.  \texttt{doi:10.1112/plms/pdp027}

\bibitem{det} J. R. Silvester, Determinants of block matrices, {\it Math. Gazette} {\bf 84} (2000), no. 501, 460--467.

\bibitem{tee} G. J. Tee, Eigenvectors of block circulant and alternating circulant matrices, {\it New Zealand J. Math.} {\bf 36} (2007), 195--211.

\bibitem{woe} W. Woess, A note on the norms of transition
operators on lamplighter graphs and groups, {\it Internat. J.
Algebra Comput.} {\bf 15} (2005), no. 5--6, 1261--1272.  \texttt{doi:10.1142/S0218196705002591}
\end{thebibliography}
\end{document}